\documentclass[11pt]{article}
\usepackage{geometry}                
\geometry{letterpaper}                   
\usepackage{amsmath, amsthm, amssymb}
\usepackage{graphicx, color}
\usepackage{pdfsync}
\newtheorem{lemma}{Lemma}

\newtheorem{proposition}[lemma]{Proposition}
\newtheorem{remark}[lemma]{Remark}

\newtheorem{theorem}[lemma]{Theorem}

\setlength{\textwidth}{145mm}
\bibliographystyle{plain}
\usepackage{graphicx}
\usepackage{amssymb}
\usepackage{epstopdf}
\DeclareGraphicsRule{.tif}{png}{.png}{`convert #1 `dirname #1`/`basename #1 .tif`.png}

\newcommand{\e}{\varepsilon}
  
\newcommand{\disp}{\displaystyle}

\newcommand{\weakstar}{\buildrel * \over \rightharpoonup}

\def\ZZ{\mathbb Z}

\title{Asymptotic analysis of a ferromagnetic lattice spin system with diffuse interfacial energy}
\author{Andrea Braides\\ \small Dipartimento di Matematica, Universit\`a di Roma Tor Vergata
\\ \small  via della ricerca scientifica 1, 00133 Roma, Italy\\ \\  Andrea Causin and Margherita Solci \\ \ \small 
DADU, Universit\`a di Sassari\\ \small 
 piazza Duomo 6, 07041 Alghero (SS), Italy}\date{}                                           

\begin{document}
\maketitle

\section{Introduction}
Despite the vast Statistical Mechanics literature on spin systems, in particular those
parameterized on a lattice, their treatment from the variational standpoint is relatively 
recent and still incomplete. In the simplest case, such systems can be seen as driven by an energy
\begin{equation}\label{SMform}
-\sum_{i,j} c_{ij} u_iu_j,
\end{equation}
where $i,j$ belong to some subset of a lattice $\mathcal L$ and the variable $u_i$ takes the value 
in $\{-1,+1\}$. 
Note that in the ferromagnetic case (i.e., when $c_{ij}\ge 0$ for all $i,j$), up to additive and multiplicative constants, it is more handy to rewrite the energies in (\ref{SMform}) as
\begin{equation}\label{CVform}
\sum_{i,j} c_{ij} (u_i-u_j)^2
\end{equation}
in order to have minimizers with 
zero energy and to avoid $+\infty-\infty$ indeterminate forms in the case of infinite domains.
The paper \cite{CDL} by Caffarelli and de la Llave provided a first homogenization result for periodic ferromagnetic spin systems
by characterizing ground states as plane-like minimizers (see also the recent paper \cite{B}).

 A later work by Alicandro {\em et al.}~\cite{ABC} formalized the treatment of such systems in terms of $\Gamma$-convergence.
Those authors set the problem in the framework of a discrete-to-continuum analysis, scaling the energies and 
characterizing the continuum limit within the theory of interfacial energies defined on 
partitions by Ambrosio and Braides \cite{AB}, and also treating some antiferromagnetic case. 
In that approach the energies are scaled by a small parameter $\e>0$ as
\begin{equation}\label{CVform-e}
\sum_{i,j}  \e^{d-1}c_{ij}(u_i-u_j)^2,
\end{equation}
where now $i,j$ are supposed to belong to $\e\mathcal L$ and $d$ is the dimension of the ambient space; i.e.,  $\mathcal L\subset \mathbb R^d$.
Note the $d-1$-th power in (\ref{CVform-e}), corresponding to a {\em surface scaling}.

In the case of ferromagnetic systems, the macroscopic magnetization parameter $u$ still only takes the
values $\pm 1$, and the continuum-limit energy has the form
\begin{equation}\label{Contform}
\int_{\partial\{u=1\}}\varphi(\nu)d{\mathcal H}^{d-1},
\end{equation}
where $\nu$ is the measure-theoretical normal to the set of finite perimeter $\{u=1\}$.
A general homogenization theorem within the class of ferromagnetic spin system was
proved by Braides and Piatnitski also allowing for random coefficients \cite{BP}. 
In all those cases the limit problem is of the form (\ref{Contform}). Applications
of this result comprise the description of quasicrystalline structures \cite{BCS,BS}
and optimal design problems for networks \cite{BK,BK2}. 
Moreover, the homogenization result for periodic systems has been recently extended to some types of 
antiferromagnetic interactions when the limit is instead parameterized on partitions
into sets of finite perimeter \cite{BC} and can be written as a sum of energies
of the form (\ref{Contform}).

Alicandro and Gelli 
\cite{AG} have recently remarked that if we take $\e$-depending coefficients;
i.e., we allow for energies of the form
\begin{equation}\label{CVform-ee}
\sum_{i,j} c^\e_{ij}(u_i-u_j)^2
\end{equation}
 in place of (\ref{CVform}), still within the framework of the discrete-to continuum analysis of
ferromagnetic systems with a surface scale scaling, the limit functional
might contain non-local terms,
of the form
\begin{equation}\label{CVform-nonloc}
\int k(x,y, u(x)-u(y))d\mu(x,y).
\end{equation}

Different scalings of the energies are also possible. In \cite{ACG} Alicandro {\em et al.}~have examined the {\em bulk scaling} 
\begin{equation}\label{CVform-bulk}
\sum_{i,j} \e^{d}f(u_i,u_j),
\end{equation}
for general $f$ (and $u_i$ taking values in a more general set),
showing that the limit is a bulk integral 
\begin{equation}\label{CVform-bulklim}
\int g^{**}(u)\,dx
\end{equation}
($g^{**}$ denotes the convex envelope of $g$).
Note however that in the ferromagnetic spin case $g$ is trivial, and can be interpreted as
a double-well potential with minima in $\pm1$. As a consequence, when the hypothesis of \cite{AG} are satisfied, formally, 
ferromagnetic spin systems can be approximated in the continuum as an expansion
\begin{equation}\label{CVform-develop}
\int g(u)\,dx+\e\int_{\partial\{u=1\}}\varphi(\nu)d{\mathcal H}^{d-1}+\ldots
\end{equation}
(see \cite{GCB,BT}).
This expression highlights a {\em separation of scales} effect, which suggests that either
a bulk or a surface scaling have to be taken into account (depending on the problem at hand),
unless higher-order scalings come into play.

In this paper we show that this is not the case for general $\e$-depending spin energies with a 
simple example in dimension one. In the notation above, the energies we will 
examine can be written as in (\ref{CVform-ee})
with $c^\e_{ij}=1$ if $|i-j|=1$ or $|i-j|=\lfloor1/\sqrt\e\rfloor$, and $c^\e_{ij}=0$ otherwise. 
These energies have long-range interactions which do not satisfy the decay conditions 
on $c^\e_{ij}$ required in \cite{AG} to obtain an integral representation
as an interfacial energy. We will instead show that energies (\ref{CVform-ee}) have a
meaningful limit, which is not of the forms described above, at an intermediate scale between the bulk and surface scales
(namely, at the scale $\sqrt\e$).
More precisely, if we examine the discrete-to-continuum limit of
\begin{equation}\label{e1}
\sum_{i,j} \sqrt\e \, c^\e_{ij}(u_i-u_j)^2,
\end{equation}
restricted to the portion of $\e\ZZ$ contained in an interval $(a,b)$ 
then the continuum parameter $u\in BV((a,b);[-1,1])$ is a function with bounded variation
taking values in $[-1,1]$, and, denoted by $Du$ its derivative in the sense of distributions
(which is a measure on $(a,b)$), we have a limit energy
\begin{equation}\label{ce1}
2|\{x\in(a,b): -1<u(x)<1\}|+|Du|(a,b).
\end{equation}
Hence, the effect of the interaction coefficients is not strong enough to force that $u\in\{\pm1\}$
but strong enough to give a dependence on $|Du|$.

\begin{figure}[h!]
\centerline{\includegraphics [width=5in]{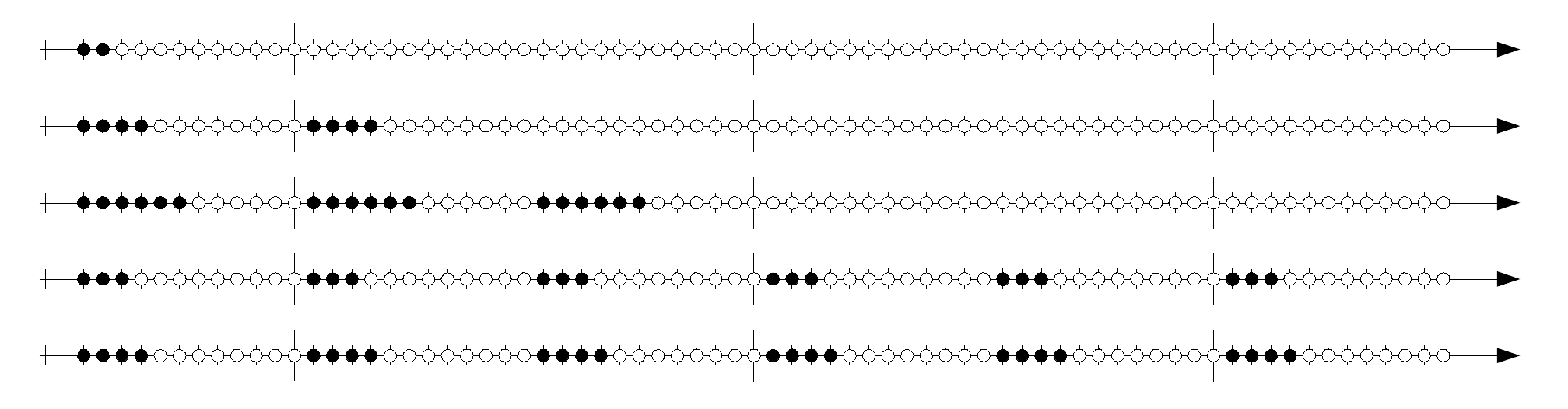}}
\caption{Optimal configurations for different volume fractions}\label{recsequence}
   \end{figure}
By examining the minimizers of the continuum limit we highlight interesting features of the optimal arrangements for the discrete energy. In Fig.~\ref{recsequence} we picture discrete minimizers with prescribed integral $\sigma$ of $u$ and for $b-a$ small. For small values of $\sigma$ we have 
a partial concentration on one side of the interval, which grows until a certain threshold, after which the function $u$ is (approximately) periodically distributed over the whole interval. We note that this description is much easier when obtained from the continuum energy.

\medskip
The idea behind the proof of the continuum approximation is that the energy in (\ref{CVform-ee}) can be equivalently interpreted 
as defined on the two-dimensional lattice $\lfloor1/\sqrt\e\rfloor\ZZ^2$
if nearest-neighbours in $\ZZ$ are interpreted as nearest-neighbours
in the vertical direction in $\lfloor1/\sqrt\e\rfloor\ZZ^2$
and, correspondingly, $\lfloor1/\sqrt\e\rfloor$-neighbours in $\ZZ$ as nearest-neighbours in the horizontal direction  in $\lfloor1/\sqrt\e\rfloor\ZZ^2$. With this identification
the energy becomes
a simple nearest-neighbour interaction energy in dimension two, 
of which we can compute the $\Gamma$-limit in the surface scaling.
Reinterpreting the limit in dimension one gives the form (\ref{ce1}) after some technical arguments.

The interest in this example is that the limit is characterized by the non-trivial topology of the graph of the connections $i,j$ with $c^\e_{ij}=1$, which is the same as that of nearest-neighbours in dimension two.
This is an argument different from the measure-theoretical ones used in the previous articles cited above.

We complement the analysis with a study of the minimum problems for the limit energy both when a volume constraint is taken into account and when periodic conditions are imposed, thus recovering the behaviour
of minimizers for the discrete problems by $\Gamma$-convergence. It is interesting to note the complex structure of the minimum energy landscape in dependence of the parameters of the problem, and in particular a size effect highlighted by the dependence on the width of the underlying interval. Furthermore, in the periodic case another parameter intervenes given by the ``defect'' of $\lfloor1/\sqrt\e\rfloor$-periodicity of the interval (normalized to a number between $0$ and $1$). Correspondingly, a boundary term must be added to the $\Gamma$-limit,
which further influences the shape of minimizers for certain values of that parameter.

\section{Statement of the result}
For the sake of notational simplicity, we consider a discrete parameter $n\in \mathbb N$ in the place of $\e$.  
In the notation used in the Introduction we choose $\e={1\over n^2}$, so that ${1\over\sqrt\e}=n$.
Moreover, we will consider spin functions with values in $\{0,1\}$ instead of $\{-1,1\}$. 

For each $n\in \mathbb N$ we define
\begin{equation}\label{def-en}
\mathcal E_n=\Big\{ \{i,j\}: \ i, j \in \mathbb N \cap (0, n^2] \hbox{ and } |i-j|= 1 \hbox{ or } |i-j|=n\Big\}.
\end{equation}
This set of indices corresponds to $i,j$ such that $c^\e_{ij}\neq 0$ in (\ref{e1}).

Let $\mathcal A_n$ be the set of the functions $u\colon(0,1]\to\{0,1\}$ with 
$u$ constant on each interval $(\frac{i-1}{n^2},\frac{i}{n^2}],$ $i=1,\ldots, n^2$.
Such a $u$ corresponds to a discrete function, which we still denote by $u$,
defined as its restriction to $
\big(\frac{1}{n^2}\mathbb N\big)\cap(0,1]$. 
We will denote $u_i$ for $u\big(\frac{i}{n^2}\big)$.  

Now, for $u\in \mathcal A_n$ we define the functional 
\begin{equation}\label{def-fn}
F_n(u)=\frac{1}{n}\sum_{\{i,j\}\in \mathcal E_n} (u_i-u_j)^2.
\end{equation}
This is a rewriting of energy (\ref{e1}), with the scaling ${1\over n}$, and with the constraint
$u_i\in\{0,1\}$ instead of $u_i\in\{-1,+1\}$.
Note that this corresponds to a scaling $\sqrt \e$, which is intermediate between
the bulk scaling $\e$ and the surface scaling (since the latter corresponds to no scaling
of the energy in dimension one).

We prove the following $\Gamma$-convergence result.  
\begin{theorem} 
The sequence of functionals $\{F_n\}$ $\Gamma$-converges with respect to the weak-$*$ convergence in $L^\infty(0,1)$ 
to the functional $F\colon L^\infty(0,1)\to [0,+\infty]$ given by 
\begin{equation}\label{def-f}
F(u)=\left\{\begin{array}{ll}
2\,\mathcal H^1(\{ x: 
0<u(x)<1 \})+|Du|(0,1) & \hbox{ if } u\in BV(0,1), \ 0\leq u\leq 1   \\ 
+\infty & \hbox{ otherwise.}
\end{array}
\right.
\end{equation} 
\end{theorem}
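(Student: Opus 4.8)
The plan is to exploit the two-dimensional reinterpretation sketched in the introduction: a configuration $u\in\mathcal A_n$, indexed by $i\in\mathbb N\cap(0,n^2]$, is rewritten as a configuration $v$ on (a portion of) the square lattice $\mathbb Z^2$ via $i = qn + r$ with $0\le r < n$, setting $v(r,q)=u_i$; then the nearest-neighbour bond $\{i,i+1\}$ becomes a vertical bond in $\mathbb Z^2$ (within a column, except at the wrap-around $r=n-1\to r=0$) and the bond $\{i,i+n\}$ becomes a horizontal bond. Up to the $O(1/n)$ error coming from the at most $n$ ``wrap-around'' vertical bonds and from $\lfloor 1/\sqrt\e\rfloor$ versus $1/\sqrt\e$, $F_n(u)$ equals $\tfrac1n$ times the standard nearest-neighbour perimeter functional of the set $\{v=1\}$ in the $n\times n$ (roughly) square grid, rescaled by $1/n$ in each direction. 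Since $\tfrac1n$ is exactly the mesh size of this rescaled square lattice, $F_n$ is, modulo lower-order terms and boundary effects, the classical discrete perimeter functional whose $\Gamma$-limit in the surface scaling is $\int_{\partial^*\{v=1\}}\|\nu\|_1\,d\mathcal H^1$ on the unit square (this is the Alicandro–Braides–Gelli / ABC computation cited as \cite{ABC,AG}, which I may invoke). The crux is then to push this two-dimensional $\Gamma$-limit back down to dimension one and identify it with \eqref{def-f}.

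For the \textbf{lower bound}: given $u_n\weakstar u$ in $L^\infty(0,1)$ with $\sup_n F_n(u_n)<\infty$, I would slice the associated 2D sets $E_n=\{v_n=1\}$. The horizontal part of the 2D energy controls $\mathcal H^0$ of the ``horizontal jumps'', i.e. the number of columns $q$ at which, for a fixed height $r$, the value changes between $q$ and $q+1$; summed in $r$ and divided by $n$ this is comparable to $|Du_n|(0,1)$ up to boundary, because a change in $u_i$ as $i\mapsto i+1$ crossing a multiple of $n$ is precisely such a horizontal jump. The vertical part controls, for each column $q$, the number of $r$'s at which $v_n$ changes: if in a given column the value is not identically $0$ or $1$, this costs at least $2/n$ (one jump up and one jump down, or a jump to a boundary — here one must be slightly careful, cf.\ the remark in the introduction that the coefficients are \emph{not} strong enough to force $u\in\{0,1\}$, which is exactly why ``mixed'' columns are allowed and contribute the bulk term). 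A column being ``mixed'' corresponds, in the original variable, to an $x$-interval of length $\approx 1/n$ on which $u_n$ oscillates; after passing to the weak-$*$ limit, the set where $0<u(x)<1$ is (up to null sets) a limit of unions of such mixed columns, each carrying energy $\ge 2/n$, giving the factor $2\,\mathcal H^1(\{0<u<1\})$. Making this rigorous is where I would use a careful slicing/measure argument: define the measure $\mu_n = \tfrac1n\sum (\text{vertical jumps in column }q)\,\delta_{q/n}$ plus the analogous horizontal-jump measure, show these have uniformly bounded mass, extract a weak-$*$ limit measure $\mu$, and prove $\mu \ge 2\mathbf 1_{\{0<u<1\}}\mathcal L^1 + |Du|$ by testing against intervals and using lower semicontinuity of total variation together with the observation that on any interval where the limit is strictly between $0$ and $1$ the column occupation fraction is bounded away from $0$ and $1$, forcing at least two jumps per relevant column.

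For the \textbf{upper bound} (recovery sequence): for $u\in BV(0,1)$ with $0\le u\le 1$, by density it suffices to treat $u$ piecewise constant taking finitely many values in $[0,1]$ on finitely many intervals, then pass to the limit in $\Gamma$-lim-sup via a diagonal argument (relaxation with respect to strict convergence, which implies weak-$*$). On an interval where $u\equiv s\in(0,1)$, I would build $v_n$ in the corresponding columns by setting, in each such column, a fraction $\approx s$ of the sites to $1$ arranged as a single contiguous block at the top (so exactly one vertical jump, plus the boundary contribution at the very top of the grid, total cost $\to 2\cdot(\text{length of the interval})$ as desired — and choosing the block positions consistently across adjacent columns in such an interval kills the horizontal contribution inside it); the block heights $\approx sn$ guarantee the column averages converge, giving $v_n\weakstar u$. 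Across an interface between intervals with values $s$ and $t$, the horizontal energy contributes $\approx |s-t|$ (a fraction $|s-t|$ of the heights $r$ see a horizontal jump), matching $|Du|$ of the jump part. Summing, $F_n(v_n)\to 2\,\mathcal H^1(\{0<u<1\}) + |Du|(0,1)$. The main obstacle I anticipate is the lower bound in the mixed regime: ruling out clever configurations where ``mixed'' columns share jumps with neighbours so as to undercut the $2/n$-per-mixed-column cost, and correctly accounting for the interplay between the ``$2\mathcal H^1(\{0<u<1\})$'' term and the ``$|Du|$'' term so that they are not double-counted where the limit function has a jump between two interior values — this requires a genuinely two-dimensional perimeter lower-semicontinuity argument rather than a naive one-dimensional slicing, and handling the boundary/wrap-around bonds carefully so their $O(1/n)$ contribution really is negligible.
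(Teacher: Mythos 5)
Your overall strategy --- reinterpreting $F_n$ as a nearest-neighbour energy on an $n\times n$ grid of mesh $1/n$ and invoking the known $\Gamma$-limit $\int_{\partial^*E}\|\nu\|_1\,d\mathcal H^1$ --- is exactly the paper's, and your upper bound is essentially the paper's construction (reduction to piecewise-constant targets by mollification, then columnwise contiguous blocks of $1$'s of height $\approx c^m n$, aligned across columns within each constancy interval). The genuine gap is in the lower bound, and it is the one you flag yourself at the end without closing. The per-column claim that a ``mixed'' column costs at least $2/n$ in vertical plus wrap-around bonds is false: take $u_n$ equal to $1$ on the bottom half of column $j$ and on the top half of column $j+1$, alternating. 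Every column is mixed with occupation fraction $1/2$, yet each carries exactly one interior vertical jump and there are no wrap-around jumps at all, so the vertical-plus-boundary part of the energy totals $1$, while $2\,\mathcal H^1(\{0<u<1\})=2$ for the limit $u\equiv 1/2$. The total energy of that configuration does blow up, but only through the horizontal bonds --- which in your additive bookkeeping have already been assigned to $|Du|$, and here $|Du|=0$. Likewise, weak-$*$ convergence to a value in $(0,1)$ does not by itself force the column occupation fractions away from $0$ and $1$ (finely alternating full/empty columns); ruling that out again uses the horizontal bonds. So the two terms of the limit cannot be obtained by slicing the vertical and horizontal bond families separately, and the measure $\mu_n$ you propose does not satisfy the inequality you need term by term.

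The missing ingredient, which is the heart of the paper's proof, is a monotone rearrangement within columns: replace $u_n$ on each $I_n^j=(\frac{j-1}{n},\frac{j}{n}]$ by the function $\hat u_n$ with the same average $\alpha_n^j$ but all its $1$'s contiguous at the left end. One checks $F_n(u_n)\ge F_n(\hat u_n)$: the vertical jump count per column can only decrease, and for the horizontal (distance-$n$) bonds one uses
$\#\bigl((n+K_{j-1}(u_n))\triangle K_j(u_n)\bigr)\ge \bigl|\#K_{j-1}(u_n)-\#K_j(u_n)\bigr|$,
the right-hand side being exactly the horizontal cost of the rearranged configuration; this is precisely the trade-off between horizontal excess and vertical deficit that your additive argument cannot see. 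After rearrangement the associated two-dimensional set is literally the subgraph $G_n$ of the column averages $\overline u_n$, the wrap-around bonds become the portion of $\partial G_n$ on $\{y=0\}$ (handled by extending $G_n$ periodically by the vector $(1/n,-1)$), and Proposition \ref{nn2} applied to these sets yields
$\int_{\partial^*G}\|\nu_G\|_1\,d\mathcal H^1+\mathcal H^1(\partial^*G\cap\{y=0\})$,
which is identified with $2\,\mathcal H^1(\{0<u<1\})+|Du|(0,1)$ once $G$ is recognized as the subgraph of $u$ (this uses $L^1$ compactness of $\overline u_n$ and also resolves the double-counting worry you raise: the $\|\nu\|_1$-perimeter of the graph automatically splits into $\mathcal H^1(\{0<u<1\})+|Du|$, with the second copy of $\mathcal H^1(\{0<u<1\})$ coming from the $\{y=0\}$ term). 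Without this rearrangement step, or an equivalent device converting arbitrary configurations into subgraphs without increasing the energy, the ``genuinely two-dimensional lower-semicontinuity argument'' you appeal to has no configuration to act on.
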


  \begin{figure}[h!]
\centerline{\includegraphics [width=6in]{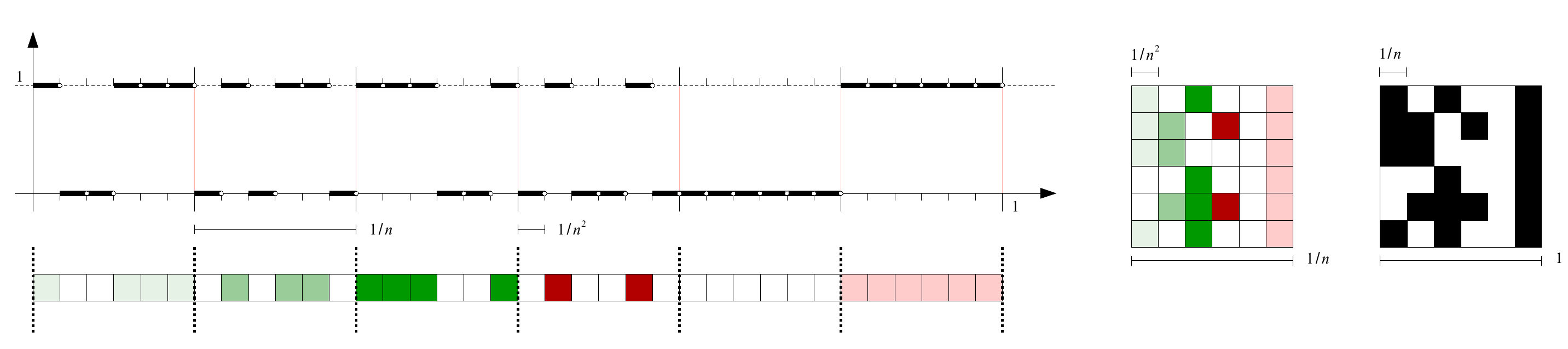}}
\caption{Identification of a spin function with a set in ${\mathbb R}^2$}\label{R2}
   \end{figure}
   To explain the form of the limit energies it is convenient to reinterpret the energies $F_n$ in a two-dimensional setting. Indeed, the decomposition 
$\frac{i}{n^2}=\frac{h_i-1}{n}+\frac{k_i}{n^2}$ with $h_i,k_i\in\{1,\dots, n\}$ induces a one-to-one correspondence between $\big(\frac{1}{n^2}\mathbb N\big)\cap(0,1]$ and 
$\big(\frac{1}{n}\mathbb N^2\big)\cap(0,1]^2$ given by $\frac{i}{n^2}\mapsto \frac{1}{n}(h_i,k_i)$. With this construction, we map each interval $(\frac{i-1}{n^2},\frac{i}{n^2}]$ to the square $(\frac{h_i-1}{n}, \frac{h_i}{n}]\times(\frac{k_i-1}{n}, \frac{k_i}{n}]$ 
so that any function $u\in\mathcal A_n$ can be represented as the characteristic function of a subset of $(0,1]^2$ (see Fig.~\ref{R2}). 
Hence, we will study the asymptotic behaviour of the sequence $F_n$ as $n\to+\infty$  
by using the results for the $\Gamma$-convergence for
nearest-neighbour interaction energies  
in the two-dimensional square lattice $\frac{1}{n}\mathbb Z^2$. 
These energies are defined  by 
\begin{equation}\label{def-en}
E_n(v; \Omega)=\frac{1}{n}\sum_{\{z,z^\prime\}\in \mathcal N(n\Omega)}(v_z-v_{z^\prime})^2
\end{equation} 
for $v\colon  \frac{1}{n}\mathbb Z^2\to \{0,1\}$  
and $\Omega$ a Lipschitz open subset of $\mathbb R^2$. 
The sum is running over the set $\mathcal N(n\Omega)$ 
of the pairs of nearest neighbours in $n\Omega\cap\mathbb Z^2$, 
and $v_z$ stands for $v(\frac{z}{n})$. The behaviour of $E_n$ is characterized 
by the following result \cite{CDL,ABC}, where it is understood that each function $v$ is extended 
as a piecewise-constant function to each square $z+[0,{1\over n})^2$. 
\begin{proposition}\label{nn2} 
The sequence $\{E_n\}$ is equicoercive on $L^1_{\rm loc} (\Omega)$. Its $\Gamma$-limit in the 
strong $L^1(\Omega)$ convergence is finite only on functions $u\in BV(\Omega;\{0,1\})$; i.e., 
on characteristic functions of sets of finite perimeter, and its value is
\begin{equation}
F^1(u)=\hbox{\rm Per}_1(\{u=1\}; \Omega):=\int_{\partial^*\{u=1\}} \|\nu_u\|_1d{\cal H}^1,
\end{equation}
where $\nu_u$ denotes the interior normal to $\partial^*\{u=1\}$ and $\|\nu\|_1=|\nu_1|+|\nu_2|$.
\end{proposition}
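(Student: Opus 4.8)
The plan is to recognise that $E_n$ is, up to effects localised near $\partial\Omega$, the $\ell^1$‑perimeter restricted to sets that are unions of lattice squares, and then to run the usual two‑sided $\Gamma$‑convergence argument for such ``crystalline'' perimeters. Extend $v$ to the piecewise‑constant function equal to $v_z$ on $\frac{z}{n}+[0,\frac1n)^2$ and write $v=\mathbf 1_{A_n}$, where $A_n$ is the union of those squares on which $v=1$. A bond $\{z,z'\}\in\mathcal N(n\Omega)$ with $v_z\ne v_{z'}$ is in one‑to‑one correspondence with the common side of the squares $\frac{z}{n}+[0,\frac1n)^2$ and $\frac{z'}{n}+[0,\frac1n)^2$, which is a segment of $\partial A_n$ of length $\frac1n$ with normal $\pm e_1$ or $\pm e_2$; since $\|\nu\|_1\equiv1$ on such sides this yields the key identification
\[
E_n(\mathbf 1_{A_n};\Omega)=\mathrm{Per}_1(A_n;\Omega)+o(1),
\]
the error being carried by sides of $\partial A_n$ within distance $O(1/n)$ of $\partial\Omega$.

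Equicoercivity and the $\Gamma$‑$\liminf$ inequality then follow from lower semicontinuity of $\mathrm{Per}_1$. If $\sup_nE_n(v_n;\Omega)<\infty$, then for every $\Omega'\Subset\Omega$ and $n$ large every side of $\partial A_n$ lying in $\Omega'$ corresponds to an admissible bond, so $\sup_n\mathrm{Per}_1(A_n;\Omega')<\infty$; since $\|\cdot\|_2\le\|\cdot\|_1$ and $0\le v_n\le1$, the $v_n$ are bounded in $BV(\Omega')$, hence precompact in $L^1_{\mathrm{loc}}(\Omega)$ by Rellich's theorem, with limits of the form $\mathbf 1_A$ for $A$ of finite perimeter. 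Given $\mathbf 1_{A_n}\to\mathbf 1_A$ in $L^1$, I use that $B\mapsto\mathrm{Per}_1(B;\Omega')$ is $L^1(\Omega')$‑lower semicontinuous — being $\sup\{\int_B\mathrm{div}\,\phi:\ \phi\in C^1_c(\Omega';\RR^2),\ \|\phi\|_\infty\le1\}$, the dual formulation appropriate to the $\ell^1$ anisotropy — to obtain $\liminf_nE_n(v_n;\Omega)\ge\liminf_n\mathrm{Per}_1(A_n;\Omega')\ge\mathrm{Per}_1(A;\Omega')$, and then let $\Omega'\uparrow\Omega$.

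For the $\Gamma$‑$\limsup$ inequality I would first treat a polyhedral set $A$ (finite perimeter, boundary a finite union of segments) and take $A_n$ to be the union of the lattice squares whose centre lies in $A$. Then $\mathbf 1_{A_n}\to\mathbf 1_A$ in $L^1$, and — this is where the $\ell^1$ anisotropy is essential — $\mathrm{Per}_1(A_n;\Omega)\to\mathrm{Per}_1(A;\Omega)$, because the discretization replaces each segment of $\partial A$ by a monotone staircase whose $\ell^1$‑length equals that of the segment, the only loss being $O(1/n)$ per vertex of $A$; together with the identification above this gives $\limsup_nE_n(\mathbf 1_{A_n};\Omega)\le\mathrm{Per}_1(A;\Omega)$. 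The general case follows by density: mollification followed by approximation by unions of small squares shows that polyhedral sets are dense among sets of finite perimeter for the convergence ``$L^1$ plus convergence of the $\ell^1$‑perimeters'', so a diagonal argument, using lower semicontinuity of the $\Gamma$‑$\limsup$, transfers the bound to arbitrary $A$.

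The conceptual core is that the limit surface tension is exactly $\|\nu\|_1$: a flat interface with normal $\nu$ breaks all horizontal bonds crossing it (in density $|\nu_1|$) and all vertical ones (in density $|\nu_2|$), and the two estimates above show that flat interfaces are optimal both from below and from above — there is no wetting‑type reduction of the interfacial cost, which is precisely what makes the staircase construction sharp. The step I expect to be the real technical obstacle is the bookkeeping of bonds meeting $\partial\Omega$: the identity $E_n=\mathrm{Per}_1(A_n;\cdot)+o(1)$ is transparent only in the interior, and one must check that the $O(1/n)$‑neighbourhood of $\partial\Omega$ contributes nothing in the limit. For the lower bound this is bypassed by the exhaustion $\Omega'\Subset\Omega$; for the upper bound one arranges the discretization — using that $\Omega$ is Lipschitz, and, after an arbitrarily small dilation, that $\partial A$ meets $\partial\Omega$ transversally so that $\mathcal H^1(\partial A\cap\partial\Omega)=0$ — so as not to create spurious perimeter along $\partial\Omega$.
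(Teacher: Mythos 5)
The paper does not actually prove Proposition~\ref{nn2}: it is quoted as a known result with a citation to \cite{CDL,ABC}. Your argument is essentially the standard proof found in those references --- identify $E_n$ with the $\ell^1$-perimeter of the union of occupied squares up to boundary terms, get compactness and the lower bound from $BV$ compactness and the $L^1$-lower semicontinuity of the anisotropic perimeter in its dual formulation, and get the upper bound from the staircase construction for polyhedral sets plus density --- and it is correct in all essential steps, including the correct observation that the monotone staircase has the same $\ell^1$-length as the segment it replaces, which is exactly why the surface tension is $\|\nu\|_1$.
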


\section{Proof of the result}
We separately prove the upper and lower bounds for the $\Gamma$-limit.
\begin{proposition}[Lower bound]\label{proof-liminf}
Let $\{u_n\}$ be a sequence with $u_n\in \mathcal A_n$ such that $F_n(u_n)$ is equibounded. 
Then there exists $u\in BV(0,1)$ such that,  
up to subsequences, $u_n\weakstar u$ in $L^\infty(0,1)$
and 
\begin{equation}\label{liminf-eq}
\liminf_{n\to+\infty} F_n(u_n)\geq F(u). 
\end{equation}
\end{proposition}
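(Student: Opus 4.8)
The strategy is to reduce the one-dimensional problem to the two-dimensional nearest-neighbour setting via the identification $\frac{i}{n^2}\leftrightarrow\frac1n(h_i,k_i)$ described after the theorem statement. Given $u_n\in\mathcal A_n$ with $F_n(u_n)$ equibounded, let $v_n\colon\frac1n\mathbb Z^2\to\{0,1\}$ be the associated two-dimensional spin function on (a neighbourhood of) the unit square $Q=(0,1]^2$. Under this identification the nearest-neighbour pairs of $\mathcal N(nQ)$ split into \emph{vertical} pairs (coming from $|i-j|=1$, except for the $n-1$ ``wrap-around'' jumps where $k$ resets from $n$ to $1$) and \emph{horizontal} pairs (coming from $|i-j|=n$). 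Hence $F_n(u_n)\ge E_n(v_n;Q) - R_n$, where $R_n$ collects the at most $n-1$ vertical boundary pairs that are present in $E_n$ but absent in $F_n$; since each such term contributes at most $1/n$, we have $R_n\le 1$, so $\{E_n(v_n;Q)\}$ is equibounded. By the equicoercivity in Proposition~\ref{nn2}, up to a subsequence $v_n\to v$ strongly in $L^1(Q)$ with $v=\chi_A$ for a set $A$ of finite perimeter, and $\liminf_n E_n(v_n;Q)\ge \mathrm{Per}_1(A;Q)$.

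Next I translate back to one dimension. The slicing structure is the key: writing $A_s=\{t:(s,t)\in A\}$ for the vertical slice at horizontal coordinate $s$, the function $u_n$, viewed as a function of $x\in(0,1]$, corresponds to reading off the columns of $v_n$ from bottom to top and left to right; concatenating the slices. I claim the weak-$*$ limit $u$ of $u_n$ in $L^\infty(0,1)$ is obtained by the same concatenation of the limiting slices: for a.e.\ $s$, the slice $\chi_{A_s}$ is the weak-$*$ limit of the corresponding vertical ``strips'' of $v_n$, and the horizontal averaging that produces $u$ from $v_n$ (since $1/n^2\ll 1/n$, the $x$-variable sweeps through an entire column in a window of length $1/n$, which is infinitesimal) forces $u(s+\tau)=$ the average over $t$ of... more precisely, after the rescaling the limit $u$ at the point with ``coarse'' coordinate $s$ is the \emph{vertical average} $\int_0^1\chi_{A_s}(t)\,dt=|A_s|$. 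So $u(s)=|A_s|$ for a.e.\ $s$, which automatically gives $0\le u\le 1$ and, by the coarea/slicing formula for sets of finite perimeter, $u\in BV(0,1)$ with $|Du|(0,1)\le \int_0^1 |D\chi_{A_s}\text{ horizontal part}|$— this is exactly the horizontal perimeter contribution. Meanwhile the vertical perimeter contribution of $A$, namely $\int (\text{number of jumps of }\chi_{A_s})\,ds$, is at least $2\,\mathcal H^1(\{s:0<|A_s|<1\}) = 2\,\mathcal H^1(\{0<u<1\})$, since a slice with $0<|A_s|<1$ must contain both values $0$ and $1$ and hence has at least two jumps (two, not one, because $A_s\subset(0,1)$). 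Adding the two contributions, $\mathrm{Per}_1(A;Q)\ge |Du|(0,1)+2\,\mathcal H^1(\{0<u<1\})=F(u)$, and combined with the previous paragraph this yields \eqref{liminf-eq}.

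The main obstacle is making the ``translate back to one dimension'' step rigorous: one must show carefully that the one-dimensional weak-$*$ limit $u$ of $u_n$ really is $s\mapsto|A_s|$, and that the lower-semicontinuity inequalities for the horizontal and vertical parts of the perimeter survive the passage through the identification — in particular handling the $n-1$ discarded wrap-around vertical pairs, checking that they do not destroy the slicing estimate (they only ever help, since we \emph{dropped} energy), and controlling the boundary layer near $x=0$ and $x=1$. A clean way to organise this is to use the localised functionals $E_n(\cdot;\Omega)$ on rectangles $\Omega=(s_1,s_2)\times(0,1)$: prove the horizontal estimate by slicing $E_n$ in the $k$-direction (one-dimensional nearest-neighbour $\Gamma$-convergence on each row) and the vertical estimate by slicing in the $h$-direction, then recombine. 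The quantity $\mathrm{Per}_1$ decomposes additively into its $\nu_1$ and $\nu_2$ parts precisely because of the $\ell^1$ norm $\|\nu\|_1=|\nu_1|+|\nu_2|$, which is what lets the two one-dimensional slicing arguments be run independently and summed; everything else is a routine compactness and lower-semicontinuity bookkeeping.
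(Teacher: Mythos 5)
Your overall strategy --- reinterpret $u_n$ as a two-dimensional spin field $v_n$, invoke Proposition \ref{nn2}, and then slice the limit set $A$ to recover $u$ --- is the same one the paper follows, and several of your steps are sound: the identification $u(s)=|A_s|$ of the weak-$*$ limit with the slice measure is correct, as is the horizontal estimate $|Du|(0,1)\le\int_{\partial^*A}|\nu_1|\,d\mathcal H^1$. But there is a genuine gap in the vertical estimate, and it sits exactly where the paper has to work hardest.

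First, the bookkeeping of the correspondence is reversed: under $i\mapsto(h_i,k_i)$ every vertical nearest-neighbour pair of $\{1,\dots,n\}^2$ comes from a pair with $|i-j|=1$ and every horizontal pair from one with $|i-j|=n$, so $\mathcal N(nQ)$ is \emph{contained} in the image of $\mathcal E_n$; the $n-1$ wrap-around pairs $(h,n)$--$(h+1,1)$ are present in $F_n$ and \emph{absent} from $E_n$, not the other way around. Hence $F_n(u_n)\ge E_n(v_n;Q)$ with no correction $R_n$ --- good for your equiboundedness step, but it means that in passing to $E_n(v_n;Q)$ you have irrevocably discarded the wrap-around energy. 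Second, and this is the real gap, the claim that a slice with $0<|A_s|<1$ has at least two jumps ``because $A_s\subset(0,1)$'' is false: the perimeter in Proposition \ref{nn2} is relative to the open set, so jumps lying on the edges $t=0$ and $t=1$ are not counted, and the slice $A_s=(0,|A_s|)$ has exactly one interior jump. Concretely, take $u\equiv 1/2$ and $u_n$ equal to $1$ on the first $n/2$ sites of each block of $n$ and $0$ on the rest; then $A=(0,1)\times(0,1/2)$ and $\mathrm{Per}_1(A;Q)=1$, while $F(u)=2$ (and indeed $F_n(u_n)\to 2$, the second unit coming precisely from the wrap-around pairs, where the value jumps from $0$ back to $1$ at every block boundary). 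So the inequality $\mathrm{Per}_1(A;Q)\ge 2\,\mathcal H^1(\{0<u<1\})+|Du|(0,1)$ that you need is simply not true, and your argument only yields $\liminf_nF_n(u_n)\ge \mathcal H^1(\{0<u<1\})+|Du|(0,1)$. To recover the factor $2$ you must retain the wrap-around interactions: the paper does this by first replacing $u_n$ with a monotone rearrangement $\hat u_n$ inside each block (checking via $\#(K\triangle K')\ge|\#K-\#K'|$ that the energy does not increase), so that the limit set is the subgraph of $u$, and then extending the associated set periodically, $\tilde G_n=G_n\cup(G_n+(1/n,-1))$, and applying Proposition \ref{nn2} on a shifted window $Q_\varrho^\eta$ straddling $\{y=0\}$; the wrap-around pairs then reappear as the piece $\mathcal H^1(\partial^*G\cap\{y=0\})=\mathcal H^1(\{0<u<1\})$ of the boundary, supplying the missing unit. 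Your slicing framework could be repaired along the same lines (count jumps of the $1$-periodic extension of $t\mapsto\chi_{A_s}(t)$, for which ``at least two'' becomes correct), but as written the proof does not close.
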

\begin{proof}
The key point of the proof is the construction of a suitable sequence of functions 
$v_n\colon \frac{1}{n}\mathbb Z^2\to \{0,1\}$ 
such that for any $\varrho>0$ and for a suitable choice of $\eta$ we have 
$$F_n(u_n; (\varrho, 1-\varrho))\geq E_n(v_n; Q_\varrho^\eta)$$ 
provided that $n > \frac{1}{\varrho}$, where $Q_\varrho^\eta=(\varrho,1-\varrho)\times(-\eta,1-\eta)$ 
and $E_n$ is the nearest-neighbour interaction energy defined in (\ref{def-en}).

Let $\{u_n\}$ be such that $u_n\in \mathcal A_n$ and $F_n(u_n) \leq c<+\infty.$ 
Since $\{u_n\}$ is equibounded in $L^\infty(0,1)$ we can assume that (up to subsequences) 
$u_n\weakstar u$ in $L^\infty(0,1)$. Denoting  for all $j=1,\ldots, n$ by
$\alpha_n^j$ the integral mean $$\alpha_n^j=n \int_{I_n^j} u_n(t)\, dt={1\over n}\sum_{k=1}^n u_n\Bigl({j-1\over n}+{k\over n^2}\Bigr),$$
where $I_n^j=(\frac{j-1}{n}, \frac{j}{n}]$, we define $\hat{u}_n$ in each $I_n^j$ by setting 
\begin{equation}\label{def-hatun}
\hat u_n= \left\{ \begin{array}{ll}
1 & \hbox{ in } (\frac{j-1}{n}, \frac{j-1}{n}+\frac{\alpha_n^j}{n}]\\ 
0 & \hbox{ in } (\frac{j-1}{n}+\frac{\alpha_n^j}{n}, \frac{j}{n}] 
\end{array}
\right. 
\end{equation}
if $0<\alpha_n^j<1$, and $\hat u_n=u_n$ otherwise (see Fig.~\ref{f-liminf}). 
  \begin{figure}[h!]
\centerline{\includegraphics [width=5in]{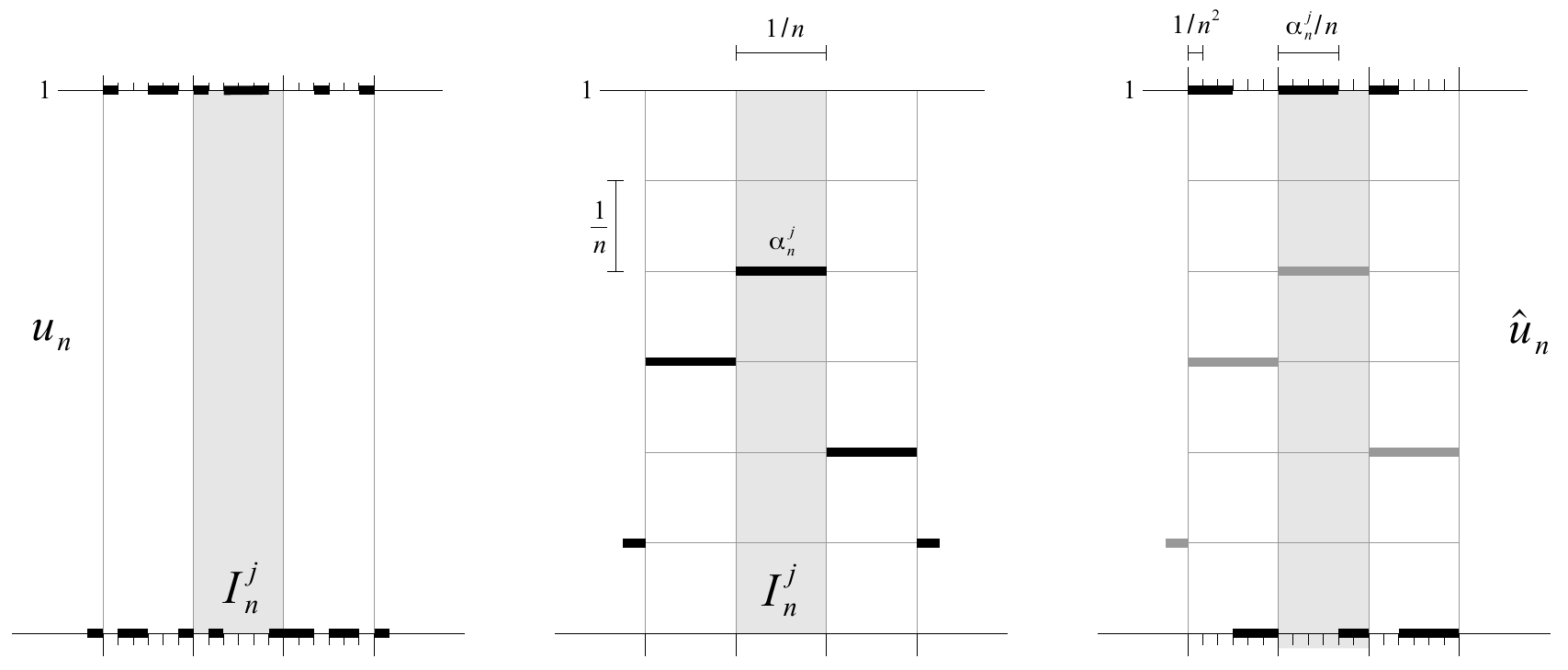}}
\caption{Construction of test functions for the lower bound}\label{f-liminf}
   \end{figure}

Note that in each $I_n^j$ this construction corresponds to setting $\hat{u}_n=1$ in the first $n\alpha_n^j$ points of 
$I_n^j\cap (\frac{1}{n^2}\mathbb N)$, and $0$ in the remaining $n- n\alpha_n^j$. By construction, it follows that $\hat{u}_n$ weakly* converges in $L^\infty(0,1)$ to the same limit $u$, and 
\begin{equation}\label{stima-hatun}
F_n(u_n)\geq F_n(\hat u_n).
\end{equation} 
Indeed, with regard to the nearest-neighbour connections, 
since $\hat u_n$ has at most one jump in $(\frac{j-1}{n}, \frac{j}{n})$, then, denoted by $S(v)$ the set of discontinuity points of a function $v$, $\#S(\hat u_n)=\min\{\#S(u_n),2\}$ in each interval $I_n^j$ for $j<n$, and $\#S(\hat u_n)=\min\{\#S(u_n),1\}$ in $I_n^n$. Concerning the long range interactions, for each pair of adjacent intervals $I_n^{j-1}$, $I_n^j$ the energy $F_n(u_n)$ counts the number of points in the symmetric difference 
$(n+K_{j-1}(u_n))\triangle  K_j(u_n)$, where $K_j(u)=\{i: u_i=1, \frac{i}{n^2}\in I_n^{j}\}$. The inequality (\ref{stima-hatun}) follows by noting that 
\begin{eqnarray*}
\# \big( (n+K_{j-1}(u_n))\triangle K_j(u_n)\big) &\geq&    \Big| \# (n+K_{j-1}(u_n)) - \# K_j(u_n)   \Big|\\
&=&   \# \big( (n+K_{j-1}(\hat u_n))\triangle  K_j(\hat u_n)\big).
\end{eqnarray*}
Now, we define the set 
$G_n=\bigcup_{j=1}^n R_n^j$ where 
$R_n^j=[\frac{j-1}{n}, \frac{j}{n}]\times[0, \alpha_n^j]$ for any $j=1,\ldots, n$.   
By construction, in each 
$Q_n(j,k)=(\frac{j-1}{n},\frac{j}{n}]\times(\frac{k-1}{n},\frac{k}{n}]$ 
we have $\chi_{G_n}=\hat u_n(\frac{j-1}{n}+\frac{k}{n^2})$. Hence, with fixed $\varrho>0$, for all 
 $n>\frac{1}{\varrho}$ 
\begin{eqnarray*}
F_n(\hat u_n)&\geq& \disp \frac{1}{n}\sum_{\{z,z^\prime\}\in\mathcal N(0,n]^2} \big((\chi_{G_n})_z-
(\chi_{G_n})_{z^\prime}\big)^2\\
&\geq& \disp\frac{1}{n}\sum_{\{z,z^\prime\}\in\mathcal N(nQ_\varrho^0)} \big((\chi_{G_n})_z-(\chi_{G_n})_{z^\prime}\big)^2
= E_n(\chi_{G_n}; Q_\varrho^0)
\end{eqnarray*}
Then, recalling Proposition \ref{nn2},  
since $E_n(\chi_{G_n}; Q_\varrho^0)=\mathcal H^1(\partial G_n \cap Q_\varrho^0)$, by compactness 
we deduce that (up to subsequences) $G_n$ converges in measure to a set of finite perimeter $G\subset Q_\varrho^0$.

In order to optimize the lower estimate, we also have to consider 
the part of the energy $F_n(\hat u_n)$ that comes from the interactions between $\frac{j}{n}$ and 
$\frac{j}{n}+\frac{1}{n^2}$ for $j=1,\dots, n-1$, corresponding to the length of 
the intersection of $\{y=0\}$ with the boundary of the periodic extension of $G_n$ to $(0,1)^2+(1/n,-1)$
in $(0,1)^2\cup ((0,1)^2+(1/n,-1)$. 
Setting $\tilde G_n=G_n\cup (G_n+(1/n,-1)$, 
for any $\eta\in (0,1)$ we have the estimate 
\begin{equation}\label{stima-en}
F_n(\hat u_n)\geq E_n(\chi_{\tilde G_n}; Q_\varrho^\eta).
\end{equation}
Note that $\tilde G_n$ 
converges in measure to $\tilde G=G\cup (G+(0,-1))$ 
in $Q_\varrho^0\cup (Q_\varrho^0+(0,-1))$.  
Since we can find $\eta\in(0,1)$ such that 
\begin{equation}
\mathcal H^1(\partial^\ast G\cap \{y=1-\eta\})=0,
\end{equation}  
then 
\begin{equation*} 
\mathcal H^1(\partial^\ast \tilde G\cap Q_\varrho^\eta)= 
\mathcal H^1(\partial^\ast G\cap Q_\varrho^0)+\mathcal H^1(\partial^\ast G\cap \{y=0\}
\cap Q_\varrho^0).
\end{equation*}

Hence, for such $\eta$, by applying the lower estimate for $E_n$ given by Proposition \ref{nn2} we get  
%
%
\begin{equation}\label{stima-nn2} 
\left. \begin{array}{ll}
\disp\liminf_{n\to+\infty} 
E_n(\chi_{\tilde G_n}; Q_\varrho^\eta)&\disp\geq  
\int_{\partial^\ast \tilde G\cap Q_\varrho^\eta} 
\|\nu_{\tilde G}\|_1\, d\mathcal H^1\\
&=
\disp\int_{\partial^\ast G\cap Q_\varrho^0}  
\|\nu_{G}\|_1\, d\mathcal H^1
+\mathcal H^1(\partial^\ast G\cap \{y=0\}\cap Q_\varrho^0).  
\end{array}
\right.
\end{equation}   

Now, we show that the weak$^\ast$ limit $u$ of $u_n$ belongs to $BV$, and $G$ is in fact its subgraph.
We start by considering $\overline u_n$ defined in each $I_n^j$ by setting 
$$\overline u_n=n\int_{I_n^j}\hat u_n(t)\, dt=n\int_{I_n^j}u_n(t)\, dt.$$
By construction, the set $G_n$ 
is the subgraph of $\overline u_n$; that is, $(x,y)\in G_n$ if and only if $0\leq y \leq \overline u_n(x)$.  
Moreover, $\{u_n\}$ is a Cauchy sequence in $L^1(\varrho,1-\varrho)$  
since $G_n$ converges in measure.  
By construction, $\overline u_n\weakstar u$ in $L^\infty(0,1)$, hence $\overline u_n\to u$ 
in $L^1(\varrho, 1-\varrho)$. Since $G_n$ is the subgraph 
of $\overline u_n$, the pointwise almost-everywhere convergence of $\overline u_n$ ensures that 
$(x,y)\in G$ if and only if $0\leq y \leq \overline u(x)$ 
almost everywhere in $Q_\varrho^0$.  
  
Since $G$ has finite perimeter, $u$ is a $BV$ function. 
Moreover, since $$\mathcal H^1(\partial^\ast G\cap \{y=0\}
\cap Q_\varrho^0)=\mathcal H^1(\{x\in (\varrho, 1-\varrho): 0<u(x)<1\}),$$ 
we have
\begin{eqnarray}\nonumber \label{stimma}
&&\hskip-2cm\disp\int_{\partial^\ast G\cap Q_\varrho^0}\|\nu_{G}\|_1\, d\mathcal H^1
+\mathcal H^1(\partial^\ast G\cap \{y=0\}\cap Q_\varrho^0)\\
&=&\mathcal H^1(\{x\in (\varrho, 1-\varrho): 0<u(x)<1\})+|Du|(\varrho, 1-\varrho)\nonumber\\&=&F(u;(\varrho, 1-\varrho)).
\end{eqnarray}  
Indeed, note that 
$$
\nu_G(x,y)= {(u',1)\over \sqrt{1+|u'|^2}} \quad x \hbox{-a.e.}\quad\hbox{ and }\quad
\nu_G(x,y)= (1,0){D_su\over |D_su|}\quad |D_su|\hbox{-a.e.}
$$
so that 
\begin{eqnarray*}
&&\hskip-2cm\int_{\partial^\ast G\cap Q_\varrho^0}\|\nu_{G}\|_1\, d\mathcal H^1
+\mathcal H^1(\{x\in(\varrho, 1-\varrho): u(x)\in\{0,1\}\})\\
&=&\int_\varrho^{1-\varrho} {1+|u'|\over \sqrt{1+|u'|^2}} \sqrt{1+|u'|^2}dx+
\int_{(\varrho,1-\varrho)}  |D_su|\\
&=&|Du|(\varrho,1-\varrho)+ \mathcal H^1(\varrho, 1-\varrho).
\end{eqnarray*}
Hence, by estimates (\ref{stima-hatun}), (\ref{stima-en}), (\ref{stima-nn2}) 
and (\ref{stimma}), we get the liminf inequality 
\begin{equation}\label{liminf}
\liminf_{n\to+\infty}F_n(u_n)\geq F(u;(\varrho, 1-\varrho))
\end{equation}
for any $\varrho>0$.
\end{proof}
\begin{proposition}[Upper bound]\label{limsup-prop}
Given $u\in BV(0,1)$ with values in $[0,1]$ there exists a recovery sequence 
$\{u_n\}$ such that 
$u_n\in\mathcal A_n$, $u_n\weakstar u$ in $L^\infty(0,1)$ and 
\begin{equation} \label{limsup-eq} 
\limsup_{n\to+\infty} F_n(u_n)\leq F(u).  
\end{equation} 
\end{proposition}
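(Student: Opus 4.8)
The plan is to build the recovery sequence by reversing the two-dimensional reinterpretation used in the lower bound. Given $u\in BV(0,1)$ with $0\le u\le 1$, consider its subgraph $G=\{(x,y): 0\le y\le u(x)\}\subset(0,1)^2$, which is a set of finite perimeter. By Proposition~\ref{nn2} (specifically, the construction of recovery sequences for $E_n$, which is part of that $\Gamma$-convergence statement), there is a sequence $v_n\colon\frac1n\ZZ^2\to\{0,1\}$ with $\chi_{nG}$-type behaviour, i.e.\ $v_n\to\chi_G$ in $L^1((0,1)^2)$ and $\limsup_n E_n(v_n;(0,1)^2)\le \mathrm{Per}_1(G;(0,1)^2)$. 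The first step is to reduce to the case where $v_n$ can be taken to be the characteristic function of a subgraph in the discrete sense, i.e.\ in each column $\{j\}$ the set $\{k: (v_n)_{(j,k)}=1\}$ is an initial segment $\{1,\dots,m_n^j\}$; this is the monotone-rearrangement step, and it only decreases the horizontal contribution to $E_n$ while possibly increasing the vertical (nearest-neighbour-in-$k$) contribution by at most a controlled amount — in fact, after rearrangement the vertical jumps count exactly the number of columns with $0<m_n^j<n$, which is what we want.

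The second step is to translate such a column-monotone $v_n$ back to a one-dimensional spin function $u_n\in\mathcal A_n$ via the bijection $\frac i{n^2}\mapsto\frac1n(h_i,k_i)$ from the excerpt: set $(u_n)_i=(v_n)_{(h_i,k_i)}$. Then by the identity noted in the lower-bound proof, the nearest-neighbour (in $\ZZ$, i.e.\ vertical) part of $F_n(u_n)$ equals $\frac1n$ times the number of internal vertical jumps of $v_n$ plus boundary corrections, which converges to $2\,\mathcal H^1(\{0<u<1\})$ (the factor $2$ coming from the fact that each column with a non-trivial value contributes one jump from $1$ to $0$ inside the column, and each such jump is counted with weight... — more precisely, $F(u)$ has the $2|\{0<u<1\}|$ term precisely because the vertical boundary of the subgraph of a non-$\{0,1\}$-valued function runs along both $\{y=0\}$-adjacent and interior edges). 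The long-range (in $\ZZ$, i.e.\ horizontal) part of $F_n(u_n)$ equals $\frac1n$ times the horizontal perimeter of $v_n$ across adjacent columns, which converges to the total variation $|Du|(0,1)$ via the same subgraph computation as in \eqref{stimma}: $\int\sqrt{1+|u'|^2}$ against the horizontal component $|\nu_1|$ of the normal gives exactly the horizontal part of $\mathrm{Per}_1(G)$, while jumps of $u$ contribute their jump height. Summing, $\limsup_n F_n(u_n)\le F(u)$.

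A technical point to handle is the discrepancy between the clean set $(0,1)^2$ and the actual index set $\mathbb N\cap(0,n^2]$, together with the boundary column $I_n^n$ and the ``wrap-around'' interactions between $\frac jn$ and $\frac jn+\frac1{n^2}$: these contribute only $O(1/n)$ to the energy and can be absorbed, but one should be slightly careful that the rearranged $v_n$ still converges weakly-$*$ (equivalently in $L^1$, by boundedness) to $u$ — this follows because column-monotone rearrangement preserves each column average $m_n^j/n$, and $\overline u_n(x)=m_n^{h(x)}/n$ is exactly the piecewise-constant approximation whose $L^1$-convergence to $u$ is standard for $u\in BV$. One also needs $\eta$-layer and $\varrho$-layer cutoffs, as in the lower bound, only if one wants to quote Proposition~\ref{nn2} on a slightly smaller square; alternatively, a diagonal argument over an exhausting sequence of subintervals combined with the lower-semicontinuity of $\Gamma\text{-}\limsup$ finishes the passage to the full interval $(0,1)$.

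I expect the main obstacle to be the rearrangement step: one must verify carefully that replacing $v_n$ by its column-wise monotone rearrangement does not increase $E_n$ by more than a vanishing amount while simultaneously making the vertical jump count equal to (not merely bounded by) the number of ``mixed'' columns, so that in the limit one gets exactly the coefficient $2$ in front of $\mathcal H^1(\{0<u<1\})$ rather than some non-optimal constant. This is the mirror image of the inequality $F_n(u_n)\ge F_n(\hat u_n)$ proved in the lower bound, and the same symmetric-difference estimate for the horizontal interactions applies, but here it must be used as an \emph{equality} in the limit, which forces the recovery sequence to be chosen in precisely this monotone form.
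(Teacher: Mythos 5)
Your overall architecture (pull back a two-dimensional recovery sequence for $E_n$ through the column identification, after a column-wise monotone rearrangement) is genuinely different from the paper's proof, which never invokes the recovery sequences of Proposition \ref{nn2}: the paper first reduces to piecewise-constant $u$ by a reflection--mollification--dilation argument (the dilation parameter $\delta$ being needed because $u\mapsto\mathcal H^1(\{0<u<1\})$ is not continuous under mollification) and then writes down an explicit $u_n$, with $\lfloor nc^m\rfloor$ ones at the bottom of each column, for which $F_n(u_n)=F(\tilde u_n)$ holds as an identity. Your route could in principle be made to work, but as written it contains a genuine error in the energy bookkeeping, located precisely at the most delicate point of the statement.

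The error is your claim that the ``wrap-around'' interactions between $\frac jn$ and $\frac jn+\frac1{n^2}$ contribute only $O(1/n)$. There are about $n$ such pairs, each weighted by $\frac1n$; for a column-monotone configuration the pair of columns $(j,j+1)$ contributes $1$ essentially whenever a neighbouring column is mixed, so their total contribution converges to $\mathcal H^1(\{0<u<1\})$, not to $0$. This is exactly the second copy of $\mathcal H^1(\{0<u<1\})$ in $F(u)$: for the subgraph $G$ one has $\mathrm{Per}_1(G;(0,1)^2)=\mathcal H^1(\{0<u<1\})+|Du|(0,1)$ only, so if the wrap-around terms really vanished your construction would yield $\limsup_nF_n(u_n)\le\mathcal H^1(\{0<u<1\})+|Du|(0,1)<F(u)$, contradicting Proposition \ref{proof-liminf} whenever $\mathcal H^1(\{0<u<1\})>0$. (Your second paragraph in fact attributes part of the factor $2$ to these very interactions, so the write-up is internally inconsistent.) Moreover, once you accept that these terms are of order one, you must prove that they contribute \emph{at most} $\mathcal H^1(\{0<u<1\})+o(1)$, i.e.\ that $\frac1n\#\{j:0<m_n^j<n\}$ is asymptotically bounded by $\mathcal H^1(\{0<u<1\})$. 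This does not follow from $\limsup_nE_n(v_n)\le\mathrm{Per}_1(G)$ alone, which only bounds the number of mixed columns by $n\bigl(\mathcal H^1(\{0<u<1\})+|Du|(0,1)\bigr)$; you would need to split $E_n$ into its vertical and horizontal parts and use the lower semicontinuity of the total variation along $\overline u_n\to u$ to show that the horizontal part cannot be underspent. None of this is present, and it is exactly the difficulty that the paper's explicit construction (with column values $\lfloor nc^m\rfloor/n$, equal to $0$ or $1$ exactly where $c^m$ is) is designed to bypass.
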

\begin{proof} 
As a first step, we show that it is sufficient to prove the limsup inequality 
for $u$ piecewise constant. 
To this end, we first use a mollification argument. We
choose $\eta>0$ such that both $\eta$ and $1-\eta$ 
are approximate continuity points for $u$, and we
extend $u$ by reflection in $(-\eta,0)$ and $(1,1+\eta)$; namely,
$$
u(x)= u(-x) \quad\hbox{ if }x\in(-\eta,0)\qquad \hbox{and }
\qquad 
u(x)= u(2-x) \quad\hbox{ if }x\in(1,1+\eta). 
$$
Denoting this extension by $u^\eta:(-\eta,1+\eta)\to {\mathbb R}$ it follows that 
\begin{equation}\label{stima-rifl}
|Du^\eta|(-\eta,1+\eta)\leq |Du|(0,1)+o(1)_{\eta\to 0}.
\end{equation}
Now, let $\{\varrho_\e\}$ be a sequence of smooth convolution kernels 
such that, for any $\e>0$, $\int_{\mathbb R}\varrho_\e(t)\, dt=1$ and supp$(\varrho_\e)=[-\e,\e]$; 
setting $u_\e^\eta=u_\eta\ast\varrho_\e$, for $\e<\eta$ we get the estimate \begin{equation}\label{stima-conv}
|Du_\e^\eta|(0,1)\leq |Du^\eta|(-\e,1-\e)\leq  |Du^\eta|(-\eta,1-\eta). 
\end{equation}
Since we need to estimate the measure of the set where the values of each function belong to $(0,1)$, 
we set for any $\delta\in(0,1)$ 
$$u_\e^{\eta,\delta}=\left(  \left(   \Big(u_\e^\eta-\frac{1}{2}\Big)(1+\delta)+\frac{1}{2}   \right)     \vee 0\right)\wedge 1;$$ 
with this definition, $0<u_\e^{\eta,\delta}<1$ implies $\frac{\delta}{4}\leq u_\e^\eta \leq 1-\frac{\delta}{4}$; then, the properties of the convolution ensure that 
\begin{equation}\label{misura}
\left. \begin{array}{ll}
 |\{x\in(0,1): 0<u_\e^{\eta,\delta}<1\}| &\leq |\{x\in (0,1): \frac{\delta}{4}\leq u_\e^\eta \leq1-\frac{\delta}{4}\}| \\
&\disp\leq |\{x\in (0,1): 0< u^\eta <1\}| +2\e.
\end{array}
\right. 
\end{equation}
Since 
$$|Du_\e^{\eta,\delta}|(0,1)\leq (1+\delta) |Du_\e^\eta|(0,1),$$ 
recalling (\ref{stima-rifl}), (\ref{stima-conv}) and (\ref{misura}) we get 
$$F(u_\e^{\eta,\delta})\leq (1+\delta) (F(u)+o(1)_{\eta\to 0})+2\e.$$  
Hence for any $\sigma>0$ we can find $\eta,\e$ and $\delta$ small enough to have 
$\|u-u_\e^{\eta,\delta}\|_{L^1}<\sigma$ and 
\begin{equation}\label{dens1} 
F(u_\e^{\eta,\delta})\leq F(u) +\sigma.
\end{equation}
Now we construct a sequence $\{u_k\}$ of piecewise-constant functions  
(where we omit the dependence on $\eta,\e$ and $\delta$) such that 
$u_k\to u_\e^{\eta,\delta}$ in $L^1(0,1)$ as $k\to+\infty$ and 
\begin{equation}\label{stima-scala}
F(u_k)\leq F(u_\e^{\eta,\delta}).
\end{equation} 
For any fixed integer $k\geq 1$ we consider the intervals 
$J_k^i=(\frac{i-1}{k}, \frac{i}{k}]$; since 
$u_\e^{\eta,\delta}\in C^0[0,1]$, for $k$ large enough either $J_k^i\cap \{ u_\e^{\eta,\delta}=0\}=\emptyset$  
or $J_k^i\cap \{ u_\e^{\eta,\delta}=1\}=\emptyset$ for any $i=1,\dots, k$. Hence, for such $k$ we can define $u_k$ by setting in each $J_k^i$
$$u_k=\left\{ \begin{array}{ll} 
0 & \mbox{ if } J_k^i\cap \{ u_\e^{\eta,\delta}=0\}\neq\emptyset\\
1 & \mbox{ if } J_k^i\cap \{ u_\e^{\eta,\delta}=1\}\neq\emptyset\\
u_\e^{\eta,\delta}(\frac{i}{k}) & \mbox{ if } J_k^i\subset \{ 0<u_\e^{\eta,\delta}<1\}.
\end{array}
\right. $$
Note that the uniform continuity of $u_\e^{\eta,\delta}$ ensures the uniform convergence of 
$u_k\to u_\e^{\eta,\delta}$, hence $u_k\to u_\e^{\eta,\delta}$ in $L^1(0,1)$. 
By construction we have that for $k$ large enough 
\begin{equation*}
\mathcal H^1(\{0<u_k<1\})\leq \mathcal H^1(\{0<u_\e^{\eta,\delta}<1\}), \qquad
|Du_k|(0,1)\leq |Du_\e^{\eta,\delta}|(0,1),
\end{equation*}  
hence (\ref{stima-scala}) holds and 
it is sufficient to prove the $\limsup$-inequality for $u$ piecewise constant
(see \cite{GCB} Remark 1.29). 

  \begin{figure}[h!]
\centerline{\includegraphics [width=6in]{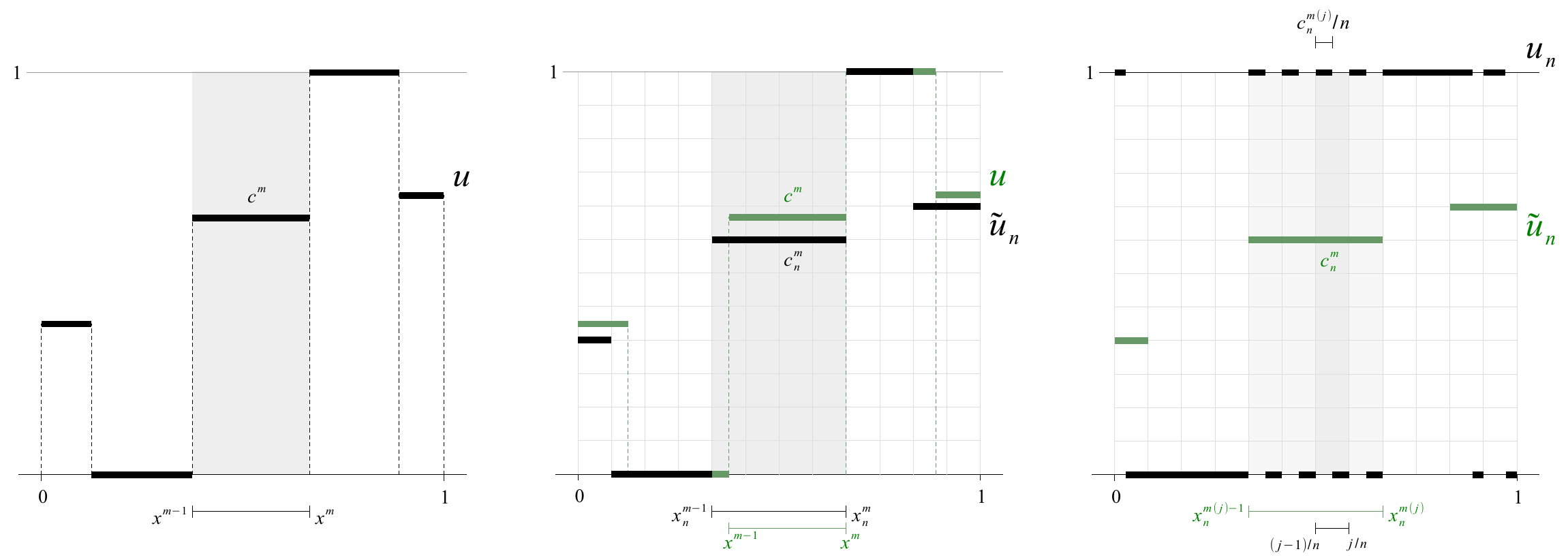}}
\caption{Construction of a recovery sequence}\label{limsup-fig}
   \end{figure} 
Let $u$ be a piecewise-constant function given by the partition $\{x^m\}_{m=0}^k$
and values $c^m\in[0,1]$ such that $u=c^m$ in $(x^{m-1},x^m)$ for $m=1,\ldots, k$  
(see the first picture in Fig.~\ref{limsup-fig}).
For each $m=0,\ldots k$ we set 
$x^m_n=\frac{\lfloor nx^m\rfloor}{n}$; we define 
$\tilde u_n$ by setting $\tilde u_n=c^m_n=\frac{\lfloor nc^m\rfloor}{n}$ 
in each interval $(x_n^{m-1}, x_n^m)$ (see the second picture in Fig.~\ref{limsup-fig}). 
In this way we get the inequality 
\begin{equation}\label{tildeu}
F(\tilde u_n)\leq F(u)+\frac{k}{n}.
\end{equation}
Now, we can construct the recovery sequence $u_n$ similarly as we did in (\ref{def-hatun}) in the proof of the liminf inequality. 
For each $j$ let $m(j)\in\{1,\dots, k\}$ be such that 
$(\frac{j-1}{n},\frac{j}{n})\subset (x^{m(j)-1}_n, x^{m(j)}_n)$; we define $u_n$ in  
$(\frac{j-1}{n},\frac{j}{n}]$ by setting 
$$ 
u_n=\left\{ \begin{array}{ll} 
1 & \mbox{ in } (\frac{j-1}{n},\frac{j-1}{n}+\frac{c_n^{m(j)}}{n}]\\
0 & \mbox{ in } (\frac{j-1}{n}+\frac{c_n^{m(j)}}{n}, \frac{j}{n}]
\end{array} 
\right.
$$
if $c_n^{m(j)}\neq 0,1$, $u_n=0$ if $c_n^{m(j)}=0$ and $u_n=1$ if $c_n^{m(j)}=1$
(see the third picture in Fig.~\ref{limsup-fig}).

Note that $u_n\weakstar u$ in $L^\infty(0,1)$. 
By construction $F_n(u_n)=F(\tilde u_n)$, hence 
\begin{equation} 
\limsup_{n\to+\infty} F_n(u_n)\leq F(u)
\end{equation}
after recalling (\ref{tildeu}).
\end{proof}

\section{Volume-constrained minimization problems} 

In this section we examine the behaviour of functionals $F_n$ subjected to the constraint of fixing the 
number of $i$ such that $u_i=1$. Since the form of the minimizers of such a constraint depends on the size of the domain, we extend the previous result to functions defined on $[0,L]$ for any $L>0$.

%

\subsection{Compatibility of the volume constraint}
Let $k_n$ be a sequence of integers such that $0\leq k_n\leq L n^2$ such that 
$$
\lim_{n\to+\infty}\frac{k_n}{L n^2}=\sigma\in(0,1).
$$ 
Let $L_n=\frac{\lfloor Ln^2\rfloor}{n^2}$ and $\mathcal A_n(L, k_n)$ be the set of the functions $u\colon(0, L_n]\to\{0,1\}$ with 
$u$ constant on each interval $(\frac{i-1}{n^2},\frac{i}{n^2}],$ $i=1,\ldots, \lfloor L n^2\rfloor$ 
 and such that
\begin{equation}\label{intc-n}
\#\Bigl\{i\in\{1,\ldots, \lfloor L n^2\rfloor\}: u_i=1\Bigr\}=k_n, 
\end{equation}
where $u_i$ stands for $u\big(\frac{i}{n^2}\big)$ as previously.  
Such a $u$ corresponds to a discrete function, which we still denote by $u$,
defined as its restriction to $
\big(\frac{1}{n^2}\mathbb N\big)\cap(0,L]$. 
If necessary we extend each such function to $0$ outside $(0, L_n]$. Such an extension
does not modify the energies we are going to consider and makes convergence statements easier.

We set  
\begin{equation}\label{def-en-L}
\mathcal E_n(L)=\Big\{ \{i,j\}: \ i, j \in \mathbb N \cap (0, L n^2] \hbox{ and } |i-j|= 1 \hbox{ or } |i-j|=n\Big\}.
\end{equation}
For $u\in \mathcal A_n(L,k_n)$ we define the functional 
\begin{equation}\label{def-fnL}
F_n^{L,k_n}(u)=\frac{1}{n}\sum_{\{i,j\}\in \mathcal E_n(L)} (u_i-u_j)^2.
\end{equation}

\begin{theorem}[compatibility of volume constraints]\label{th-con}
The sequence of functionals $\{F^{L,k_n}_n\}$ $\Gamma$-converges with respect to the weak-$*$ convergence in $L^\infty(0,L)$
to the functional $F^{L,\sigma}: L^\infty(0,L)\to [0,+\infty]$ given by 
\begin{equation}\label{def-fL}
F^{L,\sigma}(u)=\left\{\begin{array}{ll}
2\,\mathcal H^1(\{x\in (0,L): 
0<u(x)<1 \})+|Du|(0,L) &
\cr&\hskip-6cm \hbox{ if } u\in BV(0,L), \ 0\leq u\leq 1, \displaystyle \int_0^Lu\,dx=L\sigma   \cr
+\infty &\hskip-6cm  \hbox{ otherwise.}
\end{array}
\right.
\end{equation} 
\end{theorem}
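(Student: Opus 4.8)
The plan is to reduce Theorem~\ref{th-con} to the unconstrained $\Gamma$-convergence result already proven (Theorem~1 on $(0,1)$, trivially rescaled to $(0,L)$), and then to handle the volume constraint separately for the lower and upper bounds. The underlying observation is that the constraint $\int_0^L u\,dx = L\sigma$ is a continuous functional with respect to the weak-$*$ convergence in $L^\infty(0,L)$, so it passes to limits for free in the liminf inequality; the only real work is to show that a recovery sequence can be modified to satisfy the discrete constraint \eqref{intc-n} exactly, at asymptotically no energy cost.

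For the lower bound, I would argue as follows. Let $u_n\in\mathcal A_n(L,k_n)$ with $F_n^{L,k_n}(u_n)$ equibounded. By the (rescaled) lower-bound Proposition~\ref{proof-liminf} — whose proof never uses the absence of a constraint, only equiboundedness of the energy — up to subsequences $u_n\weakstar u$ in $L^\infty(0,L)$ with $u\in BV(0,L)$, $0\le u\le 1$, and $\liminf_n F_n^{L,k_n}(u_n)\ge 2\,\mathcal H^1(\{0<u<1\})+|Du|(0,L)$. It remains to check $\int_0^L u\,dx = L\sigma$. But $\int_0^{L_n} u_n\,dx = \frac{1}{n^2}\#\{i:u_{n,i}=1\}=\frac{k_n}{n^2}$ (using the $\{0,1\}$-valued, piecewise-constant structure), and $\frac{k_n}{n^2}=L\cdot\frac{k_n}{Ln^2}\to L\sigma$; since $u_n\weakstar u$ and $L_n\to L$, also $\int_0^{L_n}u_n\,dx\to\int_0^L u\,dx$, so $\int_0^L u\,dx=L\sigma$ and $F^{L,\sigma}(u)$ equals the lower bound just obtained. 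This gives \eqref{liminf-eq} for the constrained functional.

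For the upper bound, start from $u\in BV(0,L)$ with $0\le u\le 1$ and $\int_0^L u\,dx=L\sigma$. Apply the (rescaled) construction of Proposition~\ref{limsup-prop} to obtain $\tilde u_n\in\mathcal A_n(L)$ with $\tilde u_n\weakstar u$ and $\limsup_n F_n^L(\tilde u_n)\le F^{L,\sigma}(u)$ (the $F$-part of the target is the same). The issue is that $\#\{i:\tilde u_{n,i}=1\}=:\tilde k_n$ need not equal $k_n$; however $\frac{1}{n^2}\tilde k_n=\int_0^{L_n}\tilde u_n\,dx\to\int_0^L u\,dx=L\sigma$, so $|\tilde k_n-k_n|=o(n^2)$. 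I would then correct $\tilde u_n$ by flipping $|\tilde k_n-k_n|$ sites, chosen ``locally'' so as to add at most $O(1)$ new interface per correction: since $0<\sigma<1$, for $n$ large there is a column $I_n^{j_0}$ (in the two-dimensional picture, or equivalently a block of $n$ consecutive sites of $\frac{1}{n^2}\mathbb{N}$) on which $\tilde u_n$ takes value strictly between $0$ and $1$ in the sense that it has both a $0$ and a $1$; changing the boundary site of the ``$1$-block'' there, repeatedly across $O(|\tilde k_n-k_n|/n)$ adjacent columns, changes $\tilde k_n$ in steps of $1$ while the energy changes by at most a bounded amount per column. Thus the corrected $u_n$ satisfies $u_n\in\mathcal A_n(L,k_n)$, still $u_n\weakstar u$ (the correction affects an $L^\infty$-bounded function on a set of measure $|\tilde k_n-k_n|/n^2=o(1)$), and $F_n^{L,k_n}(u_n)\le F_n^L(\tilde u_n)+ C\,|\tilde k_n-k_n|/n \cdot \frac1n$; one must check this last error term is $o(1)$, which follows since $|\tilde k_n-k_n|/n^2\to 0$ gives $|\tilde k_n-k_n|/n = o(n)$, hence the contribution $\frac1n\cdot O(|\tilde k_n-k_n|/n)=o(1)$.

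The main obstacle is precisely this last quantitative point in the upper bound: one needs the discrepancy $|\tilde k_n-k_n|$ to be small enough — namely $o(n)$ would suffice trivially, but we only know $o(n^2)$ — so the correction must be done ``column by column'' so that each unit change in the count costs $O(1/n)$ in energy, not $O(1)$, and the total cost is $\frac1n\cdot O(|\tilde k_n-k_n|/n)$ rather than $\frac1n\cdot O(|\tilde k_n-k_n|)$. Making this precise requires a careful bookkeeping of how flipping one site at the top of a $1$-block in one column affects both the vertical (nearest-neighbour) interactions and the horizontal ($n$-neighbour) interactions with the two adjacent columns; the key is that spreading the correction over roughly $|\tilde k_n-k_n|/n$ columns and adjusting each by $O(n)$ sites keeps the per-column energy change bounded, so the total extra energy is $\frac1n\cdot O\!\big(|\tilde k_n-k_n|/n\big)=o(1)$. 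An alternative, perhaps cleaner, route is to first round $\sigma$ to a piecewise-constant target profile as in Proposition~\ref{limsup-prop}, arrange the volume to be exactly right at the piecewise-constant level (where the correction is explicit and local), and then only invoke the diagonal argument; I would present whichever of the two is shorter, but in either case the crux is controlling the energy cost of enforcing the exact integer constraint.
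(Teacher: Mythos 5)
Your lower bound is fine and is exactly the paper's argument (the integral constraint is weak-$*$ continuous, and $\int_0^{L_n}u_n\,dx=k_n/n^2\to L\sigma$). The gap is in the upper bound, and it is quantitative, not cosmetic. Write $D=|\tilde k_n-k_n|=o(n^2)$. Your prescription is to spread the correction over $O(D/n)$ columns, adjusting each by $O(n)$ sites. Account for this in the two-dimensional picture: raising a block of $M$ contiguous columns each by $m$ lattice sites adds $Mm$ ones and creates two new vertical interface segments of height $m/n$, i.e.\ an energy increment of order $m/n$ (interior horizontal mismatches between coherently raised columns do not change, and the vertical jump count per column does not change). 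With $M=D/n$ and $m=n$ this increment is $O(1)$, not $o(1)$; and your alternative per-column bookkeeping (``bounded amount per column'' times $D/n$ columns) gives $O(D/n)=o(n)$. Neither yields the claimed error $\frac1n\cdot O(D/n)=O(D/n^2)$, so the limsup inequality is not established along this route. The correct distribution is over \emph{all} $\sim Ln$ columns with the \emph{same} increment $\lceil D/(Ln)\rceil=o(n)$ on contiguous columns (plus one extra site on the first $b_n$ of them to absorb the remainder): then the only new interfaces are one unit horizontal mismatch at the single column where the increment drops, and the cost is $O(D/n^2)+O(1/n)=o(1)$.

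This even-distribution construction is precisely what the paper does, and it also explains why the paper does not correct a generic recovery sequence at all: it first reduces by density to piecewise-constant targets (observing that the mollification/discretization of Proposition \ref{limsup-prop} can be arranged to respect the integral constraint), then to a constant target $u=\sigma$, and for that target it builds the recovery sequence \emph{ab initio} with the exact count, writing $k_n=(\lfloor Ln\rfloor+1)a_n+b_n$ and placing $a_n$ or $a_n+1$ ones at the start of each block $(\frac{j-1}{n},\frac{j}{n}]$ (with a case distinction driven by the periodicity defect $\lambda_n=\lfloor Ln^2\rfloor-n\lfloor Ln\rfloor$), obtaining $F_n^{L,k_n}(u_n)\le\frac1n(2\lfloor Ln\rfloor+3)=2L+o(1)$. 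Your closing ``alternative route'' (fix the volume exactly at the piecewise-constant level, where the correction is explicit and local) is essentially this and would close the gap; the main route as written would not.
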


\begin{proof} Since functions satisfying the integral constraint (\ref{intc-n}) converge to functions satisfying
$\int_0^Lu\,dx=L\sigma$, the liminf inequality is immediately satisfied. It remains to prove the existence
of a recovery sequence for $u$ in the domain of $F^{L,\sigma}$. By a density argument, it suffices to treat the 
case of $u$ piecewise constant since the construction in the proof of Proposition \ref{limsup-prop} is compatible with the integral constraint. Moreover, we may reduce to treat the case of $u$ constant, since the construction below is immediately extended to a piecewise-constant $u$. 

We now exhibit a recovery sequence for the constant target function $u=\sigma$ in $(0,L)$. 


Writing $k_n=(\lfloor Ln\rfloor +1)a_n+b_n$ with $a_n\in\{0,\dots, n \}$ and 
$b_n\in\{0,\dots,  \lfloor Ln\rfloor\}$, 
we construct a function $v_n$ defined in $(0,\frac{\lfloor Ln\rfloor + 1}{n}]$ as follows. 
We denote by $\lambda_n$ the number 
$\lfloor Ln^2\rfloor -n\lfloor Ln\rfloor$ 
(measuring the `defect of periodicity' of the interval $[0,L]$)
and define $v_n$  as follows.

$\bullet$ If $a_n\leq \lambda_n $ then we have two constructions, according to $b_n$:
 in each $(\frac{j-1}{n},\frac{j}{n}]$  with  $b_n\geq j$ we set
$$ 
v_n=\left\{ \begin{array}{ll} 
1 & \mbox{ in } (\frac{j-1}{n},\frac{j-1}{n}+\frac{a_n+1}{n^2}]\\
0 & \mbox{ in } (\frac{j-1}{n}+\frac{a_n+1}{n^2}, \frac{j}{n}];
\end{array}  \right.
$$ 
 in each $(\frac{j-1}{n},\frac{j}{n}]$ with $b_n< j$ we set
$$ 
v_n=\left\{ \begin{array}{ll} 
1 & \mbox{ in } (\frac{j-1}{n},\frac{j-1}{n}+\frac{a_n}{n^2}]\\
0 & \mbox{ in } (\frac{j-1}{n}+\frac{a_n}{n^2}, \frac{j}{n}];
\end{array} \right.
$$

$\bullet$ If $\lambda_n<a_n<n$, 
then we write $a_n-\lambda_n+b_n=\gamma_n \lfloor Ln\rfloor +\delta_n,$ 
with $\delta_n<\lfloor Ln\rfloor$, and the constructions are as follows: if $\delta_n\geq j$ we set 
$$ 
v_n=\left\{ \begin{array}{ll} 
1 & \mbox{ in } (\frac{j-1}{n},\frac{j-1}{n}+\frac{a_n+\gamma_n+1}{n^2}]\\
0 & \mbox{ in } (\frac{j-1}{n}+\frac{a_n+\gamma_n+1}{n^2}, \frac{j}{n}]
\end{array} 
\right.$$
and  if  $\delta_n<j$
$$ 
v_n=\left\{ \begin{array}{ll} 
1 & \mbox{ in } (\frac{j-1}{n},\frac{j-1}{n}+\frac{a_n+\gamma_n}{n^2}]\\
0 & \mbox{ in } (\frac{j-1}{n}+\frac{a_n+\gamma_n}{n^2}, \frac{j}{n}]
\end{array} 
\right.
$$

Now, we define $u_n$ as the restriction of $v_n$ to $(0,L_n]$. 
Note that $\#\{i \in\{1,\ldots, \lfloor L n^2\rfloor \}: (u_n)_i=1\}=k_n$, that is $u_n\in\mathcal A(L,k_n)$.

Since $a_n\neq 0$ for $n$ large enough, 
the number of $\{i,j\}$ such that $|i-j|=1$ and $(u_n)_i\neq (u_n)_j$ is bounded by 
$2(\lfloor Ln\rfloor +1)$, and the number of jumps between points at distance $n$ is at most $1$. Hence 
%
%
\begin{equation}
F_n^{L,k_n}(u_n)\leq\frac{1}{n} (2\lfloor Ln\rfloor + 3)=2L+o(1)_{n\to+\infty}=F^{L,\sigma}(u)+o(1)_{n\to+\infty}. 
\end{equation}
\end{proof}

\subsection{A size effect}\label{sizeffect}
As a consequence of Theorem \ref{th-con}, minimum problems for $F_n^{L, k_n}$ (i.e., volume-constrained minimum problems for $F_n$ on $[0,L]$) converge to the minimum problem
\begin{eqnarray}\nonumber
&&\min\Bigl\{2\,\mathcal H^1(\{x\in (0,L): 0<u(x)<1 \})+|Du|(0,L):\\
&&\hskip4cm u\in BV((0,L);[0,1]), \int_0^Lu\,dx=\sigma L\Bigr\}.
\end{eqnarray}
 
This problem can be simplified by remarking that 

$\bullet$ if $u$ takes the values $1$ and $0$ on a set of non-zero measure then $\mathcal H^1(\{x\in (0,L): 0<u(x)<1 \})=0$. Indeed if $u$ takes the values $1$ and $0$ then $|Du|\ge 1$, and the function $v(x)=\chi_{[0,\sigma L]}$ has a lower energy than $u$;

$\bullet$ we may assume that $\{x\in (0,L): 0<u(x)<1 \}$ is an interval and $u$ is constant on $\{x\in (0,L): 0<u(x)<1\}$.

\noindent We end up with four cases:

\begin{enumerate}
\item[A)] the minimum is $u_{\rm min}=\sigma$. In this case, $F^{L,\sigma}(u_{\rm min})= 2L$;
\item[B)] the minimum is $u_{\rm min}=\chi_{[0,\sigma L]}$. In this case, $F^{L,\sigma}(u_{\rm min})= 1$;
\item[C)] the minimum is obtained by minimizing on functions of the form $u=
{\sigma L\over y}\chi_{[0,y]}$. In this case, $F^{L,\sigma}(u_{\rm min})= 2\sqrt{2\sigma L}$;
\item[D)] the minimum is obtained by minimizing on functions of the form $u=
1-{(1-\sigma) L\over y}\chi_{[0,\sigma y]}$. In this case, $F^{L,\sigma}(u_{\rm min})= 2\sqrt{2(1-\sigma) L}$. 
\end{enumerate}

  \begin{figure}[h!]
\centerline{\includegraphics [width=5in]{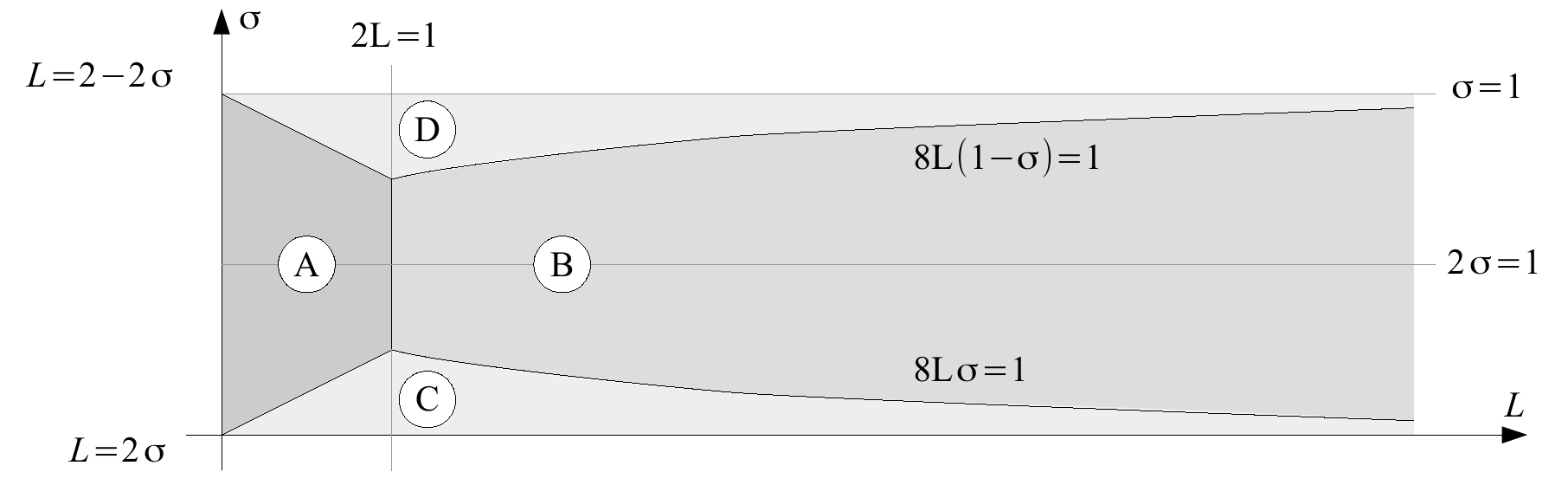}}
\caption{Description of minimizers}\label{sizefig}
   \end{figure}
\smallskip
In addition, note that the behaviour of the minimum problems is the same for $\sigma$ and $1-\sigma$, so that it is sufficient to examine the case $\sigma\le 1/2$. An explicit calculation yields the analysis highlighted in Fig.~\ref{sizefig}.

\begin{remark}[size effect for volume-constrained minimization]\rm
As remarked above, it is not restrictive to limit our description to $\sigma\le 1/2$.
We have two different behaviours depending on $L$.

\begin{figure}[h!]
\centerline{\includegraphics [width=4in]{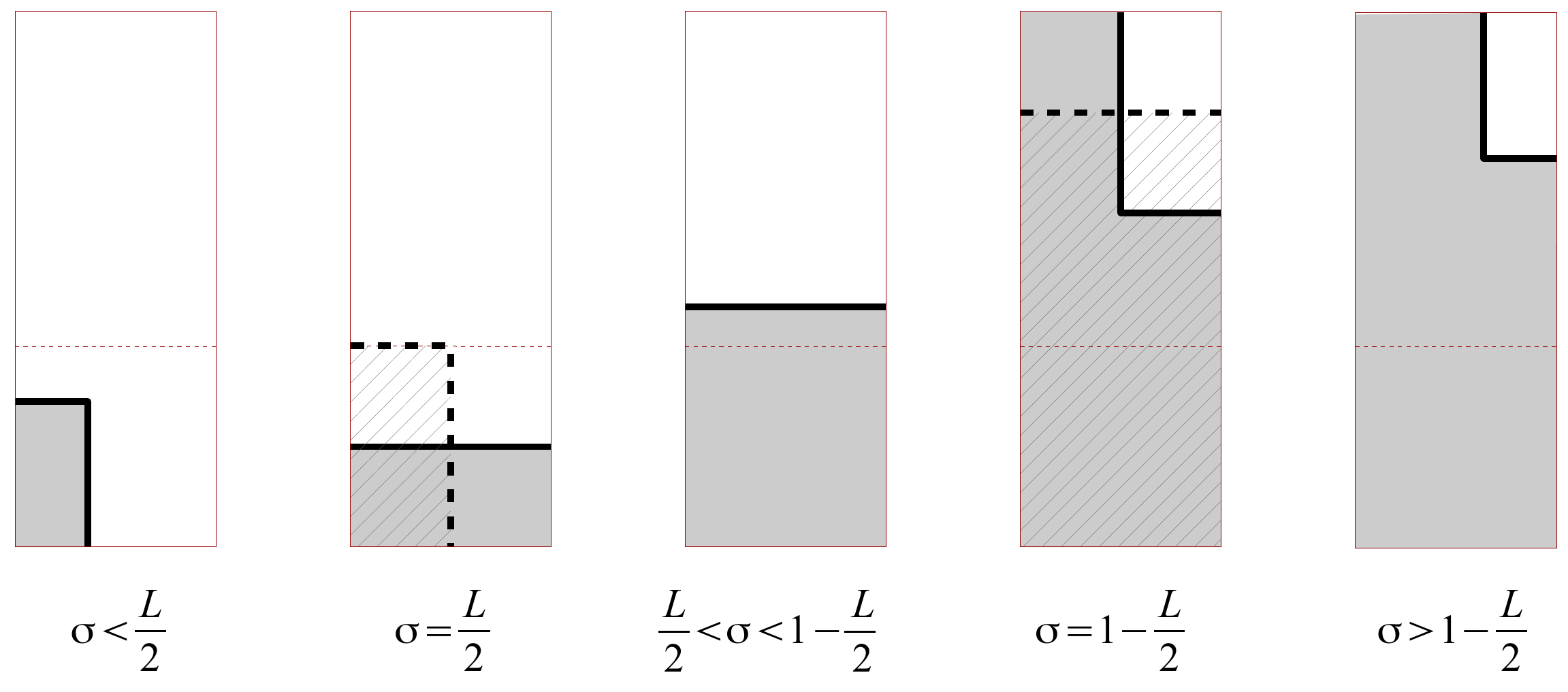}}
\caption{Evolution of the form of the minimizers in small domains}\label{piccoloNP}
   \end{figure}

\bigskip
{\bf Small-size domain:} if $L<{1\over 2}$ then we have

i) for $\sigma<{L\over 2}$ the minimizers are
$$u_1= \sqrt{2\sigma L} \,\chi_{[0,{1\over 2}\sqrt{2\sigma L}]},\qquad
u_2= \sqrt{2\sigma L}\, \chi_{[L-{1\over 2}\sqrt{2\sigma L},L]};
$$

ii) for $\sigma ={L\over 2}$ the minimizers
are the constant $u=\sigma$ and the two functions
$$
u_1= L \,\chi_{[0,{L/2}]},\qquad
u_2= L\, \chi_{[L/2,L]};
$$

iii) for ${L\over 2}<\sigma $ the minimizer
is the constant $u=\sigma$.

In Fig.~\ref{piccoloNP} we picture two-dimensional sets corresponding to minimizers of
the energy at varying $\sigma$. The length of the part of the sets highlighted in the figure 
gives the corresponding value of the energy.
Note that at $\sigma={L\over 2}$ (and symmetrically at $\sigma=1-{L\over 2}$, we have a discontinuity in the form of minimizers.

\bigskip

{\bf Large-size domain:} if $L>{1\over 2}$ then we have

i) for $\sigma<{1\over 8L}$ the minimizers are as in case {(}C) above
$$u_1= \sqrt{2\sigma L} \,\chi_{[0,{1\over 2}\sqrt{2\sigma L}]},\qquad
u_2= \sqrt{2\sigma L}\, \chi_{[L-{1\over2}\sqrt{2\sigma L},L]};
$$

ii) for $\sigma ={1\over 8L}$ the four minimizers
are the functions
$$
u_1= {1\over 2} \,\chi_{[0,{1\over4}]},\qquad
u_2= {1\over 2}\, \chi_{[L-{1\over4},L]},\qquad
u_3= \chi_{[0,{1\over 8}]},\qquad
u_4= \chi_{[L-{1\over 8},L]};
$$

iii) for $\sigma>{1\over 8L} $ the minimizers are
$$
u_1= \chi_{[0,\sigma L]},\qquad
u_2= \chi_{[(1-\sigma)L,L]}.
$$

In Fig.~\ref{grandeNP} we again picture minimizers at varying $\sigma$.
Again, note the discontinuity in the form of the minimizers
at $\sigma={1\over 8L}$  (and symmetrically at $\sigma=1-{1\over 8L}$). 

\begin{figure}[h!]
\centerline{\includegraphics [width=5in]{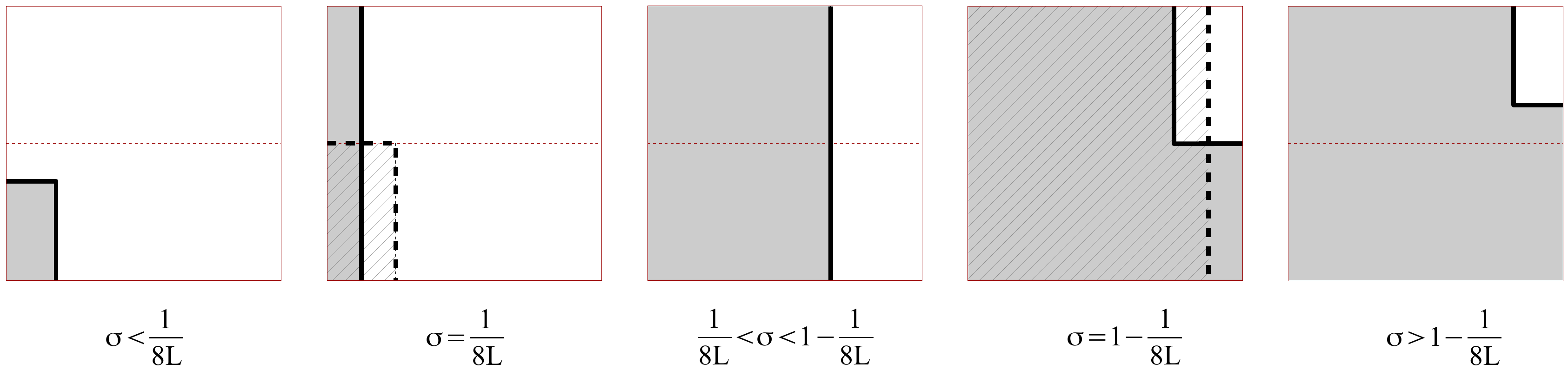}}
\caption{Evolution of the form of the minimizers in large domains}\label{grandeNP}
   \end{figure}



\end{remark}

\begin{remark}\rm  
From the description of minimizers in the previous remark we easily derive the shape of minimizers for the discrete problems, given by the corresponding recovery sequences. A pictorial description for small-size domains in given in Fig.~\ref{recsequence}.
\end{remark}

\section{Boundary effects in periodic minimization}
In this section we analyze the effect of periodic boundary conditions on the
volume-constrained minimization. The overall behaviour in dependence 
of $L$ and $\sigma$ will be described by introducing an additional parameter $\tau$,
which quantifies the `defect of ${1\over n}$-periodicity' of the underlying domain
$[0,L]$, and computing a $\tau$-depending $\Gamma$-limit. The existence
of the $\Gamma$-limit only up to subsequences depending on $\tau$ is not an 
uncommon feature when studying fine effects of homogenization depending
on boundary effects (see e.g.~\cite{BT} Section 1.3).

We define the set of the periodic interactions as 
$$\mathcal E_n^{\#}(L)  
= \{ \{i,j\}: i,j \in\mathbb N\cap (0, \lfloor L n^2\rfloor] : 
|i-j|\in\{1, \lfloor L n^2\rfloor -1, n, \lfloor L n^2\rfloor-n\}\}.$$
For $u\in\mathcal A_n(L,k_n)$  (as previously defined)
we set 
$$F_n^{\#}(u)=\frac{1}{n}\sum_{\{i,j\}\in \mathcal E_n^{\#}(L)} (u_i-u_j)^2$$
(the dependences on $L$ and $k_n$ are omitted).

We consider the sequence $\lambda_n=\lfloor Ln^2\rfloor - n\lfloor Ln\rfloor\in [0,n)$. 
Let $\tau_{n_m}$ be a subsequence of $\tau_n=\frac{\lambda_{n}}{n}$ converging to $\tau\in [0,1]$; we compute the $\Gamma$-limit 
for the corresponding sequence $\{F^\#_{n_m}\},$ again denoted by $\{F^\#_n\}$. 

\begin{remark}\label{rem-tau} \rm 
We denote by $\tau(L)$ the set of the limits of converging subsequences 
of $\{\tau_n\}$. If $L=\frac{p}{q}$, $p,q\in \mathbb N$ co-primes, then 
$\tau(L)=\{\frac{k}{q}: k\in\mathbb N, k\leq q\}$. Otherwise, if $L\not\in \mathbb Q$ then 
$\tau(L)=[0,1]$. Indeed, the difference $\tau_n-(Ln-\lfloor Ln\rfloor)$ goes to $0$ as $n\to+\infty$, and a theorem of Kronecker ensures that the sequence of the fractional parts of $Ln$ is dense in $[0,1]$ if $L$ is not rational. Note that this implies that for any $\tau\in[0,1]$ the set of the values $L>0$ such that 
$\tau\in\tau(L)$ is dense in $(0,+\infty)$.
\end{remark}

%

\begin{figure}[h!]
\centerline{\includegraphics [width=3in]{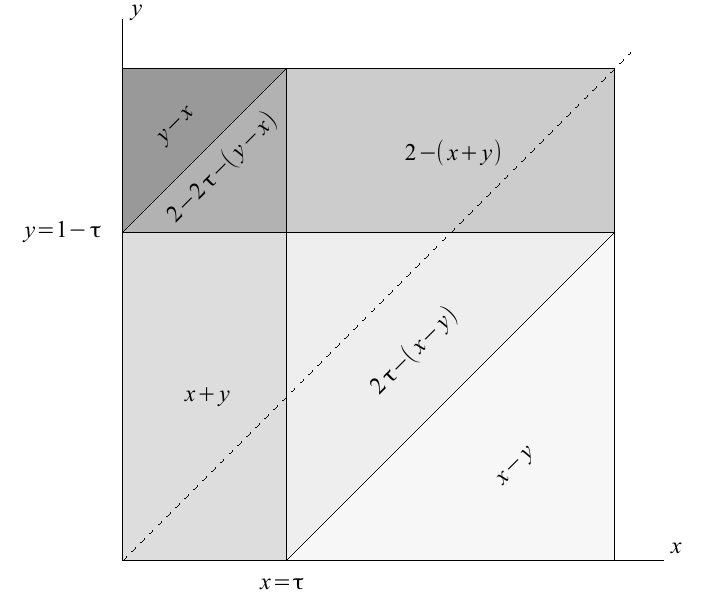}}
\caption{Pictorial description and values of $\phi_\tau$}\label{figura-phi}
   \end{figure}
For $x,y\in [0,1]$ we define 
\begin{equation}\label{fitau}
\phi_\tau(x, y)= {\mathcal H}^1([0,1] \cap (U_x^\tau\triangle U_y))
\end{equation}
 with $U_y=[0,y]$ and $U_x^\tau=[-\tau,x-\tau]\cup[1-\tau, x+1-\tau ]$ (see Fig.~\ref{figura-phi}).

\begin{theorem} Let $\tau$ be defined as above, and $\phi_\tau$ the corresponding function in {\rm(\ref{fitau})}.
The sequence of functionals $\{F_n^{\#}\}$ $\Gamma$-converges with respect to the weak-$*$ convergence in $L^\infty(0,L)$ 
to the functional $F_\tau^{\#}\colon L^\infty(0,L)\to [0,+\infty]$ given by 
\begin{equation}\label{def-fper}
F_\tau^{\#}(u)=\left\{\begin{array}{ll}
2\,{\mathcal H}^1(\{x\in(0,L) : 0<u<1\})+|Du|(0,L)+\phi_\tau(u(0^+), u(L^-))\\
& \hskip-7cm\hbox{ if } u\in BV(0,L), \ 0\leq u\leq 1,  \ \displaystyle\int_0^Lu\,dx=L\sigma  \\ 
+\infty & \hskip-7cm \hbox{ otherwise.}
\end{array}
\right.
\end{equation} 
\end{theorem}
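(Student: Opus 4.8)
The plan is to reduce, as in the proof of Theorem \ref{th-con}, to the two-dimensional nearest-neighbour picture and track the extra interactions coming from the periodic connections $|i-j|\in\{\lfloor Ln^2\rfloor-1,\lfloor Ln^2\rfloor-n\}$. Under the correspondence $\frac{i}{n^2}\mapsto\frac1n(h_i,k_i)$ used in Section 2, the interval $[0,L_n]$ becomes a set of columns $I_n^1,\dots,I_n^{\lfloor Ln\rfloor}$ of full height together with one last shorter column of height $\lambda_n/n=\tau_n$. The connection $|i-j|=\lfloor Ln^2\rfloor-n$ identifies, horizontally, the first column with the last one, but \emph{with a vertical shift} of $\tau_n$ (because $\lfloor Ln^2\rfloor = n\lfloor Ln\rfloor+\lambda_n$ is not a multiple of $n$), while the connection $|i-j|=\lfloor Ln^2\rfloor-1$ glues the top of the one-dimensional strip to its bottom, which after the identification produces a vertical periodicity defect of exactly one lattice spacing at the seam between column $\lfloor Ln\rfloor$ (or the short column) and column $1$. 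The number $\phi_\tau(u(0^+),u(L^-))$ is precisely the length, in the limit, of the boundary created by matching the column over $x=0$ (the set $U_{u(0^+)}$, suitably shifted by $-\tau$ and wrapped because of the seam, giving $U^\tau_x$) against the column over $x=L$ (the set $U_{u(L^-)}$); formula (\ref{fitau}) is just the perimeter contribution of the symmetric difference of these two one-dimensional subgraphs on the torus.

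For the \textbf{lower bound} I would repeat verbatim the construction of $\hat u_n$, $G_n$, $\tilde G_n$ from the proof of Proposition \ref{proof-liminf}, obtaining in the interior $(\varrho,1-\varrho)$ the estimate $F_n^\#(u_n;(\varrho,1-\varrho))\ge F^{L,\sigma}(u;(\varrho,1-\varrho))+o(1)$ with the same argument (monotonicity of symmetric differences, Proposition \ref{nn2}). The new point is to account for the energy concentrated near $x=0$ and $x=L$. Here one uses that, since $F_n^\#(u_n)$ is equibounded, the restrictions of $\hat u_n$ to the first and last columns converge (up to subsequences) in $L^1$ to constants $c_0,c_L\in[0,1]$, and in fact $c_0=u(0^+)$, $c_L=u(L^-)$ because $u\in BV$ has one-sided traces and the interior estimate forces $\overline u_n(0^+)\to u(0^+)$, $\overline u_n(L^-)\to u(L^-)$. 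The periodic interactions of length $\lfloor Ln^2\rfloor-n$ and $\lfloor Ln^2\rfloor-1$ count exactly $\#\big((K^{\rm last}+\text{shift})\triangle K^{\rm first}\big)$ plus the one-spacing seam term, and applying the same "$|\#A-\#B|$" inequality as in Proposition \ref{proof-liminf} together with $\lambda_n/n\to\tau$ gives $\liminf \frac1n(\text{periodic part})\ge \mathcal H^1([0,1]\cap(U^\tau_{c_0}\triangle U_{c_L}))=\phi_\tau(u(0^+),u(L^-))$. Summing the interior and boundary contributions and letting $\varrho\to0$ yields the liminf inequality.

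For the \textbf{upper bound} I would first reduce, exactly as in Proposition \ref{limsup-prop} and Theorem \ref{th-con}, to $u$ piecewise constant and compatible with the volume constraint, noting that the extra term $\phi_\tau(u(0^+),u(L^-))$ is continuous under the mollification/piecewise-constant approximations (the traces at $0$ and $L$ are fixed along those approximations, or change by $o(1)$, and $\phi_\tau$ is Lipschitz in both arguments since it is the $\mathcal H^1$-measure of a symmetric difference of intervals). Then, on a piecewise-constant $u$ with traces $c^1=u(0^+)$ and $c^k=u(L^-)$, I would take the recovery sequence $u_n$ built for Theorem \ref{th-con} (which already satisfies the volume constraint and has $F_n^{L,k_n}(u_n)\to F^{L,\sigma}(u)$) and only modify the configuration in the first and last columns so as to realize, in those two columns, the optimal two-dimensional interface between $U^{\tau_n}_{c^1_n}$ and $U_{c^k_n}$; this costs $\frac1n$ times the number of sign changes along that seam, which converges to $\phi_\tau(c^1,c^k)$. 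Since the modification affects only $O(n)$ points it preserves weak-$*$ convergence and changes the volume by $o(n^2)$, which can be reabsorbed by adjusting $O(1)$ further columns as in Theorem \ref{th-con} at no extra asymptotic cost, giving $\limsup F_n^\#(u_n)\le F^{L,\sigma}(u)+\phi_\tau(u(0^+),u(L^-))=F_\tau^\#(u)$.

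The main obstacle I expect is the careful bookkeeping of the vertical shift and the seam in the two-dimensional reinterpretation: one must check that the periodic connection $|i-j|=\lfloor Ln^2\rfloor-n$ really corresponds to gluing columns $1$ and $\lfloor Ln\rfloor+1$ (or $1$ and $\lfloor Ln\rfloor$, depending on whether the short column is nonempty) with the precise shift $\lambda_n$, that the connection $|i-j|=\lfloor Ln^2\rfloor-1$ adds exactly the one-spacing wrap contributing the "$+1$" hidden in the definition of $U^\tau_x$ as a union of two intervals, and that in the limit these combine into $\phi_\tau$ as in (\ref{fitau}) with $U^\tau_x=[-\tau,x-\tau]\cup[1-\tau,x+1-\tau]$. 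Once the geometry of this gluing is pinned down, both inequalities follow from the already-established machinery (Propositions \ref{nn2}, \ref{proof-liminf}, \ref{limsup-prop} and Theorem \ref{th-con}) with only routine modifications.
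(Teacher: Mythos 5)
Your two-dimensional reading of the periodic interactions (columns $1$ and $\lfloor Ln\rfloor$ glued with a vertical shift $\lambda_n$, plus the one-spacing wrap coming from $|i-j|=\lfloor Ln^2\rfloor-1$) is the right geometric picture, and your upper bound is essentially the paper's; in fact no modification of the first and last columns is needed, since the bottom-packed recovery sequence of Theorem \ref{th-con} already pays exactly $\phi_{\tau_n}(u(0^+),u(L^-))$ at the seam. The lower bound, however, has two genuine gaps. First, you assert that the first- and last-column averages converge to the traces, $c_0=u(0^+)$ and $c_L=u(L^-)$, ``because the interior estimate forces $\overline u_n(0^+)\to u(0^+)$''. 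This is false: weak-$*$ convergence with equibounded energy allows boundary layers (a sequence vanishing on $(0,1/n]$ and equal to $1$ elsewhere has $\alpha_n^1\to 0$ while the limit has trace $1$ at $0$), and the $O(1)$ energy of such a layer lives in $(0,\varrho)$, so it is lost if you estimate only the interior on $(\varrho,L-\varrho)$ and then feed the wrong constants into $\phi_\tau$. Second, for the periodic part the inequality $\#(A\triangle B)\ge|\#A-\#B|$ yields only $\liminf\frac1n(\hbox{periodic part})\ge|c_0-c_L|=\phi_0(c_0,c_L)$, which is in general strictly smaller than $\phi_\tau(c_0,c_L)$: one has $\phi_\tau(x,y)\ge|x-y|$ with strict inequality whenever $U_x^\tau$ and $U_y$ are not nested. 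Extracting the full $\phi_\tau$ requires using that column $1$ is simultaneously constrained by its forward neighbour (column $2$, bottom-packed after the interior rearrangement) and by the shifted last column; note also that naively bottom-packing column $1$ can strictly \emph{increase} the periodic count, which is why the paper replaces $\hat u_n$ there by the separate rearrangement $\check u_n$ built from the index sets $\mathcal I^0,\mathcal I^1$, at the price of a controlled error $c/n$ in (\ref{stima-checkun}).

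Both difficulties are resolved at once in the paper by a global step your decomposition omits: after constructing $\check u_n$, the set $G_n$ is extended over the Bravais lattice generated by $\bigl(\frac{\lfloor Ln\rfloor}{n},1-\tau_n\bigr)$ and $(0,1)$ (this $\tilde G_n$ is a genuinely different object from the purely vertical extension of Proposition \ref{proof-liminf} that you reuse), and Proposition \ref{nn2} is applied on the fixed open set $Q^\eta=(\eta,L+\eta)\times(\eta,1+\eta)$ straddling the seam. Lower semicontinuity of the $1$-perimeter on this larger domain automatically charges both any boundary layer near $x=0,L$ and the mismatch across the seam to $\hbox{\rm Per}_1(T_\tau(u);(0,L]\times(0,1])=F_\tau^\#(u)$, with no need to identify limits of traces or to estimate the periodic interactions separately. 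You should rework your lower bound along these lines rather than splitting it into an interior term plus a boundary count.
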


  \begin{figure}[h!]
\centerline{\includegraphics [width=5.3in]{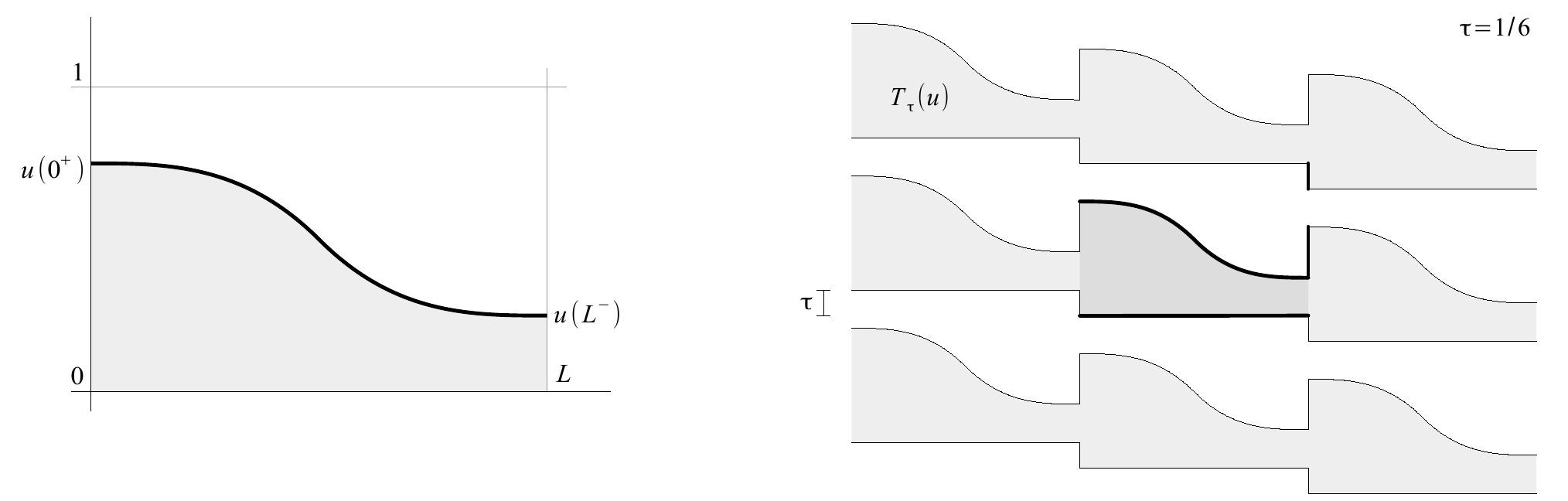}}
\caption{Description of $T_\tau(u)$}\label{T-tau}
   \end{figure}
\begin{remark}[Interpretation in terms of perimeters]\rm
Given $u\in BV((0,L); [0,1])$ and $\tau\in[0,1]$, 
we define the set $T_\tau(u)\subset\mathbb R^2$ as 
\begin{equation}\label{def-ttau}
T_\tau(u)=\bigcup_{n_1,n_2\in\mathbb Z} \big(V(u)+ (n_1 L, n_1 (1-\tau)+n_2)\big),
\end{equation}
where $V(u)=\{(x,y): 0\leq x \leq L, 0\leq y\leq u(x)\}$ is the subgraph of $u$ 
(see Fig.~\ref{T-tau}). 
Note that the value of the $1$-perimeter of $T_\tau(u)$ in a periodicity cell 
equals $F_\tau^{\#}(u)$; i.e.,
\begin{eqnarray*}
\hbox{\rm Per}_1(T_\tau(u); (0,L]\times(0,1])&=&2{\mathcal H}^1(\{x\in (0,L): 0<u<1\})\\&&\qquad + |Du|(0,L)
+\phi_\tau(u(0^+), u(L^-)).
\end{eqnarray*}
The value of $\phi_\tau(u(0^+), u(L^-))$ is the measure of the part of the graph 
on the boundary of the periodicity cell highlighted in Fig.~\ref{T-tau}.
\end{remark}

\begin{remark}[Properties of $\phi_\tau$]\label{prop-fi}\rm
If $\tau=0$, then $\phi_0(x, y)=|x-y|$. 
Since $\phi_{\tau}(x,y)=\phi_{1-\tau}(y,x)$, we can reduce to consider the case 
$\tau\leq\frac{1}{2}$. The following monotonicity property of $\phi_\tau$ will be useful in the computation of the minima of the functional $F^\#$: 
\begin{equation}\label{monotonia-phi} 
\phi_\tau(x,y)\leq \phi_\tau(y,x) \ \ \hbox{ if }\ \ x\geq y  \ \ \hbox{ for }\ \ \tau\leq \frac{1}{2}.  
\end{equation}
Moreover, note that 
$$\phi_\tau(s,s)=\left\{ 
\begin{array}{ll}
2s & \hbox{ if } s\leq \min\{\tau,1-\tau\}\\
2\min\{\tau,1-\tau\} & \hbox{ if } \min\{\tau,1-\tau\}\leq s \leq \max\{\tau,1-\tau\}\\
2(1-s) & \hbox{ if } \max\{\tau,1-\tau\}\leq s, 
\end{array}
\right. $$
hence for the constant function $u=\sigma$ we get $F_\tau^{\#}(u)=2\min\{\tau,1-\tau,\sigma,1-\sigma\}.$ 
\end{remark}

%
%
%
%

\begin{proof}[Proof of the lower bound] 
Let $\{u_n\}$ be such that $u_n\in \mathcal A_n(L,k_n)$ and $F^\#_n(u_n) \leq c<+\infty.$ 
Since $\{u_n\}$ is equibounded in $L^\infty(0,L)$ we can assume that (up to subsequences) 
$u_n\weakstar u$ in $L^\infty(0,L)$. 
Following the proof of Proposition \ref{proof-liminf}, 
we denote by $\alpha_n^j$ the integral mean of $u_n$ in $I_n^j$, where $I_n^j=(\frac{j-1}{n}, \frac{j}{n}]$ for $j<\lfloor Ln \rfloor$ and $I_n^{\lfloor Ln \rfloor}=(\frac{\lfloor Ln \rfloor}{n}, L_n]$. 
Firstly, we 
define $\check{u}_n$ in each $I_n^j$ with $j>1$ by setting 
\begin{equation}\label{def-hatcheck}
\check u_n= \left\{ \begin{array}{ll}
1 & \hbox{ in } (\frac{j-1}{n}, \frac{j-1}{n}+\frac{\alpha_n^j}{n}]\\ 
0 & \hbox{ in } (\frac{j-1}{n}+\frac{\alpha_n^j}{n}, \frac{j}{n}] 
\end{array}
\right. 
\end{equation}
if $0<\alpha_n^j<1$, and $\check u_n=u_n$ otherwise.  Note that this part of the construction the same as the one in definition (\ref{def-hatun}).  
Now, we define $\check u_n$ in $I_n^1$ in order to minimize the $n$-range jumps in this interval. To this end, 
we have to set, whenever possible, $(\check u_n)_i=1$ if $i$ belongs to the set $\mathcal I^1$ of the indices such that 
$(\check u_n)_{i+n}= (\check u_n)_{i+\lfloor Ln^2\rfloor-n}=1$ and $ (\check u_n)_i=0$ if  $i$ belongs to the set $\mathcal I^0$ of the indices such that 
$(\check u_n)_{i+n}=(\check u_n)_{i+\lfloor Ln^2\rfloor-n}=0$. 
Note that, by construction, $\mathcal I^1$ has one of the three following forms:  $\mathbb Z\cap (0,i^{\prime}]$ with $i^\prime\leq n\tau_n$, 
$\mathbb Z\cap (n\tau_n, i^{\prime\prime}]$ with $n\tau_n<i^{\prime\prime}\leq n$ or $\mathbb Z\cap ( (0,i^{\prime}]\cup (n\tau_n, i^{\prime\prime}])$. Similarly, the set $\mathcal I^0$ can be written as the union of at most two ``intervals'' 
$\mathbb Z\cap (j^\prime,n\tau_n]$ and  
$\mathbb Z\cap (j^{\prime\prime}, n]$.  
We define $\check u_n$ in $I_n^1$ by considering three cases. 
If $n\alpha_n^1\leq \# \mathcal I^1$, we set $(\check u_n)_i=1$ 
for the first $n\alpha_n^1$ indices $i$ in $\mathcal I^1$, and $0$ otherwise. 
If $ \# \mathcal I^1<n\alpha_n^1\leq n-\#\mathcal I^0$, then we define  
$(\check u_n)_i=1$ for any $i\in\mathcal I^1$ and for the first $n\alpha_n^1-\#\mathcal I^1$ points in the complementary set of $\mathcal I^1\cup \mathcal I^0$, 
and $0$ otherwise. 
Finally, if $n-\#\mathcal I^0<n\alpha_n^1$, we define $(\check u_n)_i=1$ 
for any $i$ in the complementary set of $\mathcal I^0$ and in the first 
$n\alpha_n^1-(n-\#\mathcal I^0)$ points of $\mathcal I^0$, and $0$ otherwise. 
The function $\check u_n$ belongs to $\mathcal A_n(L,k_n)$, 
and, following the idea of the proof of the estimate (\ref{stima-hatun}),  
from the construction of $\check u_n$ we deduce that
\begin{equation}\label{stima-checkun} 
F^\#_n(u_n)\geq F^\#_n(\check u_n)-\frac{c}{n}, 
\end{equation}
where $c$ is independent on $n$. 

As before, we now construct a set $\tilde G_n\subset\mathbb R^2$ such that for $\eta>0$ we have 
$$F^\#_n(\check u_n)\geq E_n(\tilde G_n; Q^\eta),$$
where $E_n$ is the functional defined in (\ref{def-en}) and $Q^\eta=(\eta,L+\eta)\times(\eta,1+\eta)$. Denoting by $\check u_n^\#$ the periodic extension to $\mathbb R$ of $\check u_n$, we set 
$$\chi_{G_n}=\check u_n^\#\Bigl(\frac{j-1}{n}+\frac{k}{n^2}\Bigr) \ \ \hbox{ in } \ \  Q_n^{k,j}=\Bigl(\frac{j-1}{n},\frac{j}{n}\Bigr]\times\Bigl(\frac{k-1}{n},\frac{k}{n}\Bigr]$$
with $j=1,\dots, \lfloor Ln \rfloor$ and $k=1, \dots, n$. We set 
$$\tilde G_n = \bigcup_{n_1,n_2\in \mathbb Z} \Bigl(G_n+\Bigl(n_1\frac{\lfloor Ln\rfloor}{n}, n_1(1-\tau_n)+n_2 \Bigr)\Bigr).$$ 
The same argument as in the proof of Proposition \ref{proof-liminf} ensures that $G_n$ converges in measure to a set $G$ with finite perimeter in $Q^0$, which is in fact the subgraph $V(u)$ of the weak$^\ast$-limit $u$ of $u_n$. 
Note that $u$ turns out to be $BV(0,L)$ with values in $[0,1]$, 
and $\int_0^L u\, dx=\sigma L$. Let $\eta\in (0,1)$ be such that 
$$\mathcal H^1(\partial^\ast T_\tau(u)\cap \partial Q^\eta)=0,$$ 
where $T_\tau$ is defined in (\ref{def-ttau});  
hence, by Proposition \ref{nn2} 
\begin{eqnarray*}
\liminf_{n\to +\infty} F^\#_n(u_n)&\geq& \liminf_{n\to +\infty} F^\#_n(\check u_n)
\ge  \liminf_{n\to +\infty} E_n(\tilde G_n; Q^\eta)\\&\geq  &
\hbox{Per}_1(T_\tau(u); Q^\eta)=\hbox{Per}_1(T_\tau(u); (0,1]\times (0,L])
\end{eqnarray*}
as claimed. \end{proof}

%
%
%
%
%
%
%
%
%
%

\begin{proof}[Proof of the upper bound] 
We can use the same recovery sequence as in the proof of the upper bound in Theorem \ref{th-con}. Indeed, the approximation used in the proof of Proposition \ref{limsup-prop} is compatible with the addition of the boundary term $\phi_\tau$, which is continuous along the sequences constructed therein. Moreover, the additional interactions taken into account in $F_n^\#$ asymptotically give the 
term with $\phi_\tau$.
\end{proof}

\subsection{Analysis of minimum problems}

We now briefly describe the behaviour of minimum problems for $F^\#_\tau$ in dependence of $\tau$, $\sigma$ and $L$. In order to understand the behaviour of minimizers, it is convenient to refer to the two-dimensional interpretation of the energies, where the effect of the mismatch in periodicity can be seen as the necessity to consider the extension of subsets on $(0,L)\times(0,1)$ by periodicity on the Bravais lattice generated by $(L,1-\tau)$ and $(0,1)$, as in the definition of $T_\tau$ above. Note that this extension has no significant energetic effect for sets as the ones in Fig.~\ref{grandeNP}, but it might for sets as in Fig.~\ref{piccoloNP}, in particular for rectangles corresponding to constant $u$. This will lead to a more complex typology of minimizers.

We first note that in order to compute minimum values, we can always reduce to piecewise-constant functions, as in the previous analysis of minimum values of $F^{L,\sigma}$.  
Note moreover that, setting $\tilde u(x)=u(L-x)$, we have 
$$F_\tau^\#(u)=F_\tau^\#(1-\tilde u).$$
Hence, we may limit the study to the case $\sigma\leq \frac{1}{2}$. 
Recalling that $F_\tau(u)=F_{1-\tau}(\tilde u)$ (see Remark \ref{prop-fi})
we can also assume $\tau\leq \frac{1}{2}$.

We start by showing that we can assume $u$ monotone non-increasing. 
Indeed, let $u$ be a piecewise-constant function in $[0,L]$ 
and denote by $u_{d}$ the non-increasing rearrangement of $u$.
Let $z_m=\min\{z : u(z)=m=\min u\}$ and $z_M=\min\{z : u(z)=M=\max u\}$. 
If $z_m\geq z_M$, then  
\begin{equation*}
F^{\#}_\tau(u)-F^{\#}_\tau(u_{d})\geq 
u(L)-u(0)+\phi_\tau(u(0), u(L))
-(m-M+\phi_\tau(M, m)).
\end{equation*}
Since for $\tau\leq \frac{1}{2}$ the function $y-x+\phi_\tau(x,y)$ turns out to be non-increasing along any direction 
$(h,-k)$ with $h,k\geq 0$, 
we deduce that 
\begin{equation*}
u(L)-u(0)+\phi_\tau(u(0),u(L)) \geq m-M+\phi_\tau(M,m)
\end{equation*}
showing that $F^{\#}_\tau(u)\geq F^{\#}_\tau(u_{d}).$ 
If $z_m<z_M$, the conclusion follows by recalling (\ref{monotonia-phi}) and noting that for $\tau\leq \frac{1}{2}$ 
the function $x-y+\phi_\tau(x,y)$ is non-increasing along any direction 
$(-h,k)$ with $h,k\geq 0$. Indeed  
\begin{eqnarray*}
F^{\#}_\tau(u)-F^{\#}_\tau(u_{d})&\geq& 
u(0)-u(L)+\phi_\tau(u(0),u(L))-(m-M+\phi_\tau(M, m)\\
&\geq&u(0)-u(L)+\phi_\tau(u(0),u(L))-(m-M+\phi_\tau(m, M))\ \geq\ 0.
\end{eqnarray*}
%
Hence, we may assume that $u$ has the form $u(0)\chi_{(0,y)}+u(L)\chi_{(y,L)}$ 
with $u(0)\geq u(L)$ (and with the integral constraint $u(0)y+u(L)(L-y)=\sigma$).  
If $u(0)<1$ and $0<u(L)$, then the monotonicity of $x-y+\phi_\tau(x,y)$ ensures that 
$$F_\tau^\#(u)\geq F_\tau^\#(u_\sigma)$$
where $u_\sigma$ is the constant function with value $\sigma$.  
Moreover, $F_\tau^\#(u)=F_\tau^\#(u_\sigma)$ if and only if $\tau\leq \sigma
$ and 
$u(0)-u(L)\leq\tau.$ 
  \begin{figure}[h!]
\centerline{\includegraphics [width=5in]{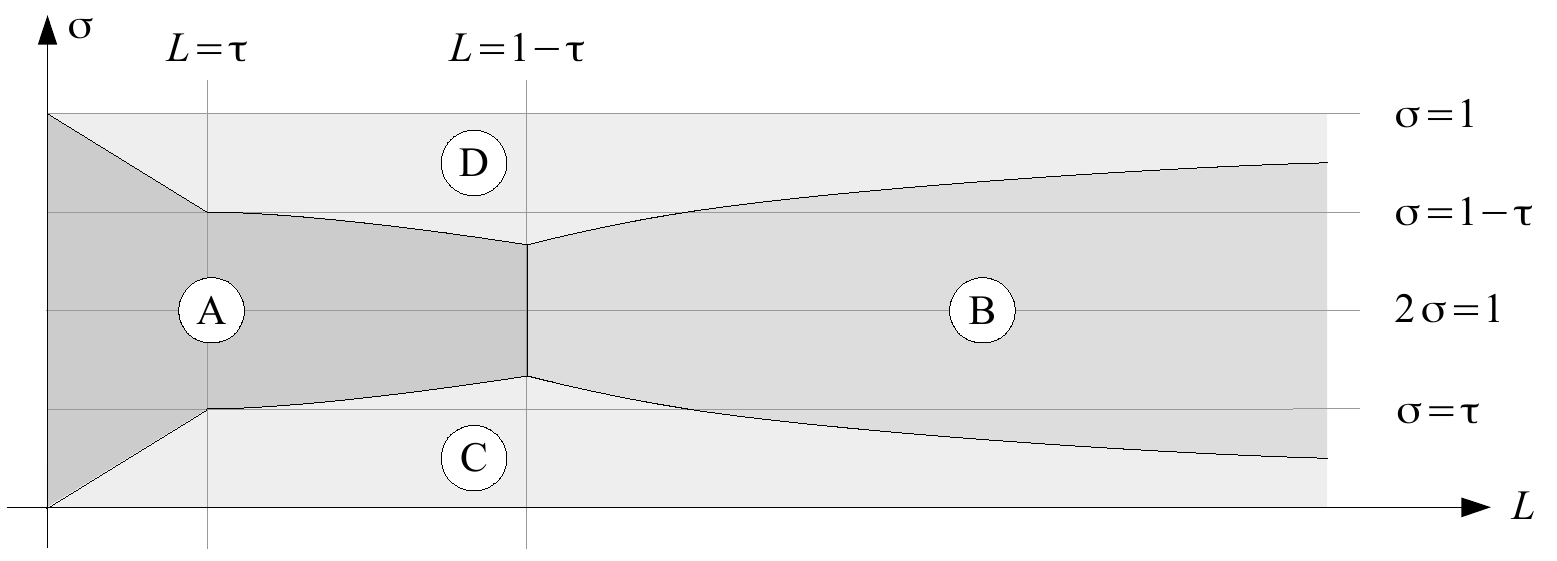}}
\caption{Description of minimum problems at given $\tau\in(0,1/2)$}\label{teletubbie}
   \end{figure}
Concluding, we again end up with four cases, pictured in Fig.~\ref{teletubbie}.
In order to take into account all cases with a common notation, we set
$$
\tau_*=\min\{\tau, 1-\tau\},\qquad \tau^*=\max\{\tau, 1-\tau\}.
$$

A) If $(L,\sigma)$ satisfies one of the following conditions   
\begin{equation*} 
\left. 
\begin{array}{ll}\vspace{3mm}
\displaystyle L\le \tau_* &\displaystyle\qquad \hbox{ and } \qquad L\leq \sigma\leq 1- L\\
\displaystyle\tau_*\le L\le \tau^* 
&\displaystyle\qquad \hbox{ and } \qquad 
{(L+\tau_*)^2\over 4L}\le \sigma\le 1-{(L+\tau_*)^2\over 4L},  
\end{array}
\right. 
\end{equation*}
then a minimizer is $u_{\min}=\sigma$, with energy $F_\tau^\#(u_{\min})=2L+2\min\{\sigma,1-\sigma, \tau_*\}$. 
Note that if $\tau=0$ the conditions for $(L,\sigma)$ are described by $L\leq 1$ and 
$\frac{1}{4L}\leq \sigma\leq 1-\frac{1}{4L}$, similarly to the situation pictured in Fig.~\ref{sizefig}. 
The minimizer is unique only if $\sigma<\tau_\ast$ or $\sigma>\tau^\ast$ (see Remark \ref{rem-minimi}); 


B) if $L\ge \tau^*$ and
$$
{1\over 4L}\le \sigma\le 1-{1\over 4L}
$$
then a minimizer is the characteristic function $u_{\min}=\chi_{[0,\sigma L]}$. The energy is 
$F_\tau^\#(u_{\min})=2$. Note that all other minimizers are the translations 
of $u_{\min}$;

C) otherwise, if $\sigma\le{1\over 2}$,
the minimum is obtained by minimizing on functions of the form $u=
{\sigma L\over y}\chi_{[0,y]}$. In this case, $F_\tau^\#(u_{\rm min})= 4\sqrt{\sigma L}$,
and again all other minimizers are obtained by translation;

D) finally, in the remaining cases, the minimum is obtained by minimizing on functions of the form $u=1-{(1-\sigma) L\over y}\chi_{[0,\sigma y]}$. In this case, $F_\tau^\#(u_{\rm min})= 
4\sqrt{(1-\sigma)L}$,
and again all other minimizers are obtained by translation.


\begin{remark}[size effect in the periodic case]\label{rem-minimi}\rm
We limit our description to $\sigma\le 1/2$,  
and we consider $\tau_\ast\in (0,\frac{1}{2})$, analyzing separately the limit cases 
$\tau_\ast=0$ and $\tau^\ast=\frac{1}{2}$.
\bigskip

{\bf Small-size domain:}  if $L<\tau_\ast$ we have

i) for $\sigma<{L}$ the minimizers 
are in the case {(}C) above 
\begin{equation} \label{sigma-piccolo}
\sqrt{\sigma L} \,\chi_{[s,\sqrt{\sigma L}+s]} \ \ \hbox { with }\ \ s\leq L-\sqrt{\sigma L}; 
\end{equation}

ii) for $\sigma =L$ the expressions in (\ref{sigma-piccolo}) define the constant function 
$u=L$, which is 
the only minimizer of the energy.
Note that the same holds, if $\sigma\geq\frac{1}{2}$, in the case $\sigma=1-L$; 

\smallskip 

iii) for ${L}<\sigma<\tau_\ast$ the minimizer
is the constant $u=\sigma$;

\smallskip

iv) for $\sigma\geq\tau_\ast$ the minimizers are all monotone non-increasing $u$ satisfying the integral constraint and the boundary conditions $u(0)-u(L)\leq\tau_*$.
%

\smallskip
 \begin{figure}[h!]
\centerline{\includegraphics [width=5.8in]{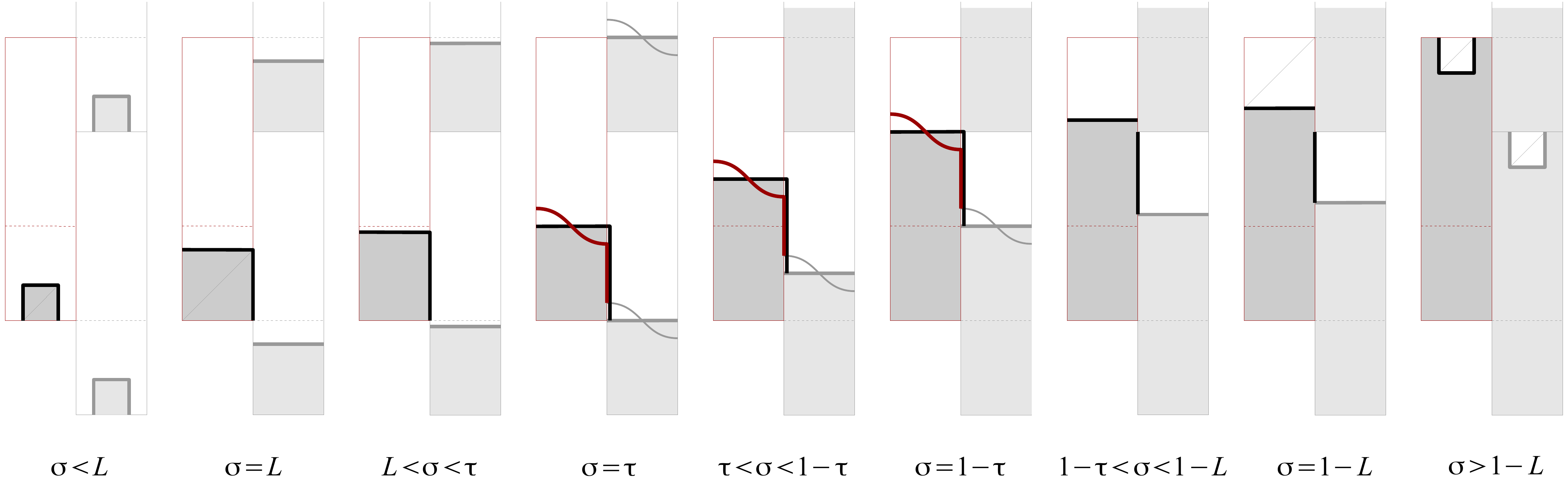}}
\caption{Minimizers of the energy with varying $\sigma$ for small-size domains}\label{piccoloP}\end{figure}
Note that at $\sigma={\tau_\ast}$, we have a discontinuity in the form of minimizers. The evolution of the form of the minimizers if $L<{\tau_\ast}$ is pictured in Fig.~\ref{piccoloP}. For the other cases the form of minimizers can be similarly described and we refer to the figures in Section \ref{sizeffect}
for the necessary changes.

\bigskip

{\bf Intermediate-size domain:} if $\tau_\ast<L<\tau^\ast$ 
then we have

i) for $\sigma< {(L+\tau_*)^2\over 4L}$ 
the minimizers are the functions in (\ref{sigma-piccolo});

\smallskip 

ii) for $\sigma = {(L+\tau_*)^2\over 4L}$ the minimizers are the functions of the form  
\begin{equation*}
\frac{L+\tau_\ast}{2} \,\chi_{[s,\frac{L+\tau_\ast}{2}+s]} 
\hbox { if }\ \ s\leq \frac{L-\tau_\ast}{2}
\end{equation*}
the constant $u=\sigma$ and all monotone non-increasing $u$ satisfying the integral constraint and the boundary conditions $u(0)-u(L)\leq\tau_*;$ 

\smallskip

iii) for ${(L+\tau_*)^2\over 4L} <\sigma$ the minimizers are all monotone non-increasing $u$ satisfying the integral constraint and the boundary conditions $u(0)-u(L)\leq\tau_*.$ 

\smallskip

The evolution of the form of the minimizers is similar to the situation described in Fig.~\ref{piccoloNP}  
for the non-periodic case, with a different critical threshold: here, the discontinuity appears 
at $\sigma={(L+\tau_*)^2\over 4L}$ and corresponds to a critical value ${L+\tau_\ast\over 2}$. 

\bigskip 

{\bf Large-size domain:} if $L>{\tau^\ast}$ then we have

i) for $\sigma<{1\over 4L}$ 
the minimizers are the functions in (\ref{sigma-piccolo});

\smallskip

ii) for $\sigma ={1\over 4L}$ the minimizers are the functions in (\ref{sigma-piccolo}) which in this case become  
\begin{equation*}
{1\over 2} \,\chi_{[s,{1\over 2}+s]} 
\ \ \hbox { if }\ \ s\leq L-{1\over 2} \qquad \hbox{ and } \qquad 
{1\over 2}\, \chi_{[0,{1\over 2}-s]\cup [{1\over 2},L]} 
\ \ \hbox { if }\ \ s>L-{1\over 2};
\end{equation*}
and the characteristic functions of the form $\chi_{[s,s+{1\over 4}]}$ and $\chi_{[0,{1\over 4}-s]\cup[s,L]}$; 

\smallskip

iii) for ${1\over 4L}<\sigma$ the minimizers are the characteristic functions of the form $\chi_{[s,s+{1\over 4}]}$ and $\chi_{[0,{1\over 4}-s]\cup[s,L]}$.

\smallskip

Again, 
the evolution of the form of the minimizers can be described as in Fig.~\ref{grandeNP}  
for the non-periodic case, with 
a discontinuity 
at $\sigma={1\over 4L}$.

\bigskip 

Note that if $\tau_\ast=0$ or $\tau_\ast=\frac{1}{2}$, 
then we only have two regimes (for domains with $L<1$ and with $L>1$) as in the non-periodic case.  
\end{remark}

\begin{remark}\rm 
Note that $0\in\tau(L)$ for any $L>0$ (see Remark \ref{rem-tau}), so that 
for all $\sigma$ 
$$\inf_{\tau\in\tau(L)} \min \{F^\#_\tau(u)\}=\min \{F^\#_0(u)\}$$ 
where the minimum is attained in $\tau=0$.
Moreover, the minimum value of the functional $F^\#_\tau$ is independent of $\tau$,
in the following cases: 
$$L\geq 1; \qquad L<1 \ \hbox{ and } \ L\leq \min\{\sigma,1-\sigma\}; 
\qquad 
L<1 \ \hbox{ and } \ \frac{L}{4} \geq \min\{\sigma, 1-\sigma\},$$
when it equals the minimum value of the functional $F^\#_0$.
\end{remark}

%
%
%
%
%
%
%
%

\end{document}